\numberwithin{equation}{section}
\numberwithin{equation}{subsection}
\newtheorem{thm}{Theorem}[section]
\newtheorem{proposition}[thm]{Proposition}
\newtheorem{lemma}[thm]{Lemma}
\newtheorem*{remark*}{Remark}
\newtheorem{example}[thm]{Example}
\newcommand{\Hom}{{\mathrm{Hom}}}
\newcommand{\res}{\mathrm{res}}
\newcommand{\tra}{{\mathrm{tra}}}
\newcommand{\Ho}{{\mathrm{H}}}
\newcommand{\Ker}{\mathrm{Ker}~}
\newcommand{\Image}{\mathrm{Im}~}
\newcommand{\B}{\mathrm{B}}
\newcommand{\bQ}{{\mathbb Q}}
\newcommand{\bZ}{{\mathbb Z}}
\begin{document}

{

\title[On  Bogomolov multiplier of finite groups]{An exact sequence and triviality of Bogomolov multiplier of groups}

\author{Sumana Hatui}
\address{School of Mathematical Sciences, National Institute of Science Education and Research, An OCC of Homi Bhabha National Institute, Bhubaneswar 752050, Odisha, India}
 \email{sumana.iitg@gmail.com, sumanahatui@niser.ac.in}

\subjclass[2020]{13A50, 14E08, 14M20,  20D15, 20J06}
\keywords{Unramified Brauer  group,  Bogomolov multiplicator, Finite $p$-groups}

\maketitle

\begin{abstract}
The Bogomolov multiplier $\B_0(G)$ of a finite group $G$ is  the subgroup of
the Schur multiplier  $\Ho^2(G,\mathbb Q/\mathbb Z)$ consisting of the cohomology classes which vanish after restricting to every  abelian subgroup of $G$.  
We give a new proof of a Hopf-type formula for $\B_0(G)$ and derive an exact sequence for the cohomological version of  the Bogomolov multiplier. Using this exact sequence we provide  necessary and sufficient conditions for the corresponding inflation homomorphism to be an epimorphism and to be the zero map.   
Finally, we  give a complete list  of  groups of order $p^6$, for odd prime $p$, having trivial Bogomolov multiplier, so completing the 2020 investigation of Chen and Ma.
\end{abstract}

\section{Introduction}
 The Bogomolov multiplier  $\B_0(G)$ of a finite group $G$ is the subgroup of $\Ho^2(G,\mathbb Q/\mathbb Z)$  containing all the cohomology classes whose restriction to every abelian subgroup of $G$ is zero.
It is a group theoretical invariant isomorphic to the unramified Brauer group of a given quotient space, and it has a connection with Noether’s problem on rationality of fields of invariants. Let  $G$ be a finite group, and let $V$ be a faithful representation of $G$ over the field $\mathbb C$. Then $G$ acts naturally on the field of rational functions $\mathbb C(V)$.
Noether’s problem \cite{noether} asks  for which groups $G$  the field of $G$-invariant functions $\mathbb C(V)^G$  is rational over $\mathbb C$?
A question related to the above is whether $\mathbb C(V)^G$  is stably rational over $\mathbb C$, i.e.,  are  there independent variables $y_1,y_2, \cdots, y_s$ such that  $\mathbb C(V)^G(y_1,y_2, \cdots, y_s)$ is a purely transcendental extension over $\mathbb C$?
This problem has close connection with Lüroth’s problem \cite{shaf} and the inverse Galois problem \cite{saltman, swan}. By Hilbert’s Theorem $90$ the stable rationality   does not depend upon the choice of $V$, it depends only on the group $G$.
Saltman \cite{saltman} introduced the unramified Brauer group $\Ho^2_{nr}(\mathbb C(V )^G,\mathbb Q/\mathbb Z) $ and using it he found examples of groups $G$ of order $p^9$ such that $\mathbb C(V)^G$  is not stably rational over $\mathbb C$. 
Later Bogomolov \cite{bogomolov} explored the group $\Ho^2_{nr}(\mathbb C(V )^G,\mathbb Q/\mathbb Z) $  and proved that it is canonically isomorphic to 
$$
\B_0(G)=\bigcap_{ A \leq G; A \text{ finite abelian}}\Ker \big(\res^G_A: \Ho^2(G,\mathbb Q/\mathbb Z) \to \Ho^2(A,\mathbb Q/\mathbb Z)\big),
$$
which is a subgroup of  $\Ho^2(G, \bQ/\bZ)$.   
Kunyavski{\u\i} \cite{Kun} called $\B_0(G)$  the \emph{Bogomolov multiplier} of $G$. 
Bogomolov \cite{bogomolov} used this description to give  examples of  groups $G$ of order $p^6$ having $\B_0(G) \neq 0$ and this  non-vanishing property of $\B_0(G)$ answers the classical Noether’s problem, i.e., the invariant field $\mathbb C(V)^G$ is not rational over $\mathbb C.$ So the triviality of $\B_0(G)$ is an obstruction to the Noether's problem.\\

Another description of  $\B_0(G)$ was given by Moravec  \cite{Mora}.
Let $G\wedge G$ denote  the \emph{non-abelian exterior square} of  group $G$  (see  Section \ref{Moravec} for details).  For $x,y\in G$, we write a commutator $[x,y]=x^{-1}y^{-1}xy$. 
Let $M(G)$ be the kernel of the commutator map  $G \wedge G \to [G,G]$.
Define $M_0(G)=\langle x \wedge y\mid [x,y]=1\rangle$.  
Moravec \cite{Mora}  defined 
$$\tilde{\B}_0(G) =M(G)/M_0(G)$$ and
proved that, for a finite group $G$, 
$$\B_0(G) \cong \Hom(\tilde{\B}_0(G), \mathbb{Q}/\mathbb{Z}).$$
Hence for finite groups $G$, $\B_0(G) \cong \tilde{\B}_0(G).$
This description  helps us to compute $\B_0(G)$ explicitly;  see for example \cite{ Mora1, Mora,  Mora2}.  We also use this description in Section \ref{Bogop6} to compute $\B_0(G)$ for  groups $G$ of order $p^6$, where $p$ is an odd prime.

\vspace{.2cm}

We establish some notation before proceeding  further.
We denote the commutator subgroup  and center of $G$ by $G'$ and $Z(G)$ respectively.  The set of commutators $\{[x,y] \mid x,y \in G\}$  is denoted by $K(G)$.
For subgroups $H$ and $K$ of $G$, the group  generated by 
$\{ [h,k], h\in H, k\in K \}$ is denoted by $[H,K]$.

\vspace{.2cm}

Let $G$ be a quotient of the free group $F$ by the normal closure  $R$ of a given set of relations  of $F$. In this context we say that $F/R$ {\it determines} a presentation for $G$.
Moravec  \cite{Mora}  proved a Hopf-type formula for $\tilde{\B}_0(G)$ by showing that 
\begin{eqnarray*}
\tilde{\B}_0(G) \cong   \frac{F' \cap R}{\langle K(F) \cap R\rangle}.
\end{eqnarray*} 
To define this isomorphism, he uses the notion of covering group and exterior square of the group $G$. 
Now we are ready to state our first main result  which provide an alternative proof of a Hopf-type formula. 
Our defined map is more explicit in nature and simplifies the proof of  \cite{Mora}.
This isomorphism will   also be  used  to prove Theorem \ref{exactsequence_Bogo}} and  Theorem \ref{inflation_epimorphism}.

\vspace{.2cm}

\begin{thm}\label{Hopftypeformula}
Let $G$ be a finite group with  presentation determined by $F/R$. If $A=\mathbb{Q}/\mathbb{Z}$, then the transgression map 
$$\tra: \Hom(R/\langle K(F) \cap R \rangle, A) \to \B_0(G),$$
corresponding to the exact sequence
$$
1 \to R/\langle K(F) \cap R \rangle \to  F/\langle K(F) \cap R \rangle  \to G \to 1,
$$
is surjective. Consequently,  
\begin{eqnarray*}
\B_0(G) \cong  \Hom \Big(\frac{F' \cap R}{\langle K(F) \cap R\rangle}, \mathbb{Q}/\mathbb{Z}\Big).
\end{eqnarray*}
\end{thm}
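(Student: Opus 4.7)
The plan is to apply the Lyndon--Hochschild--Serre five-term exact sequence to the central extension in the statement, compute the image of the transgression using divisibility of $\bQ/\bZ$, and verify both inclusions needed to identify that image with $\B_0(G)$. The main step will be the reverse inclusion $\B_0(G)\subseteq\Image(\tra)$, which I will handle by lifting the projection $F\to G$ to the central extension classifying $\alpha\in\B_0(G)$ and checking that the lift factors through $F/S$.

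Write $S=\langle K(F)\cap R\rangle$ and note that $[F,R]\subseteq K(F)\cap R\subseteq S$, so $R/S$ is central in $F/S$. The inflation--restriction sequence with coefficients $A=\bQ/\bZ$ reads
\begin{equation*}
\Hom(F/S,A)\xrightarrow{\res}\Hom(R/S,A)\xrightarrow{\tra}\Ho^2(G,A)\xrightarrow{\inf}\Ho^2(F/S,A).
\end{equation*}
A homomorphism $\phi\in\Hom(R/S,A)$ lies in $\Ker(\tra)=\Image(\res)$ iff it extends to $F/S$; since $(F/S)^{\mathrm{ab}}=F^{\mathrm{ab}}$ is free abelian and $A$ is divisible, such an extension exists iff $\phi$ vanishes on $(R/S)\cap(F/S)'=(F'\cap R)/S$. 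Hence $\Image(\tra)\cong\Hom((F'\cap R)/S,\bQ/\bZ)$.

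For the inclusion $\Image(\tra)\subseteq\B_0(G)$, the key point is that $S$ was constructed so that preimages of abelian subgroups are abelian: if $x,y\in F$ have commuting images in $G$, then $[x,y]\in R\cap K(F)\subseteq S$, so $[\bar x,\bar y]=1$ in $F/S$. Hence the preimage $\tilde H\leq F/S$ of any abelian $H\leq G$ is abelian, and by divisibility of $A$ every homomorphism $R/S\to A$ extends to $\tilde H$, so the five-term sequence for $1\to R/S\to \tilde H\to H\to 1$ forces $\Ho^2(H,A)\to\Ho^2(\tilde H,A)$ to be injective. The naturality $\res_{\tilde H}\circ\inf=\inf\circ\res_H$ then yields $\res_H(\alpha)=0$ for any $\alpha\in\Ker(\inf)=\Image(\tra)$, and since $H$ was arbitrary, $\alpha\in\B_0(G)$.

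For the reverse inclusion, fix $\alpha\in\B_0(G)$ and let $1\to A\to E\to G\to 1$ be the central extension classifying $\alpha$. Since $A$ is divisible, $\alpha\in\B_0(G)$ is equivalent to $[\tilde x,\tilde y]=0$ in $A$ for every commuting pair $x,y\in G$ and any lifts $\tilde x,\tilde y\in E$. Using freeness of $F$, lift the projection $\pi\colon F\to G$ to a homomorphism $\psi\colon F\to E$. For each $[x,y]\in K(F)\cap R$ the elements $\psi(x),\psi(y)$ lift commuting elements of $G$, so $\psi([x,y])=[\psi(x),\psi(y)]=0$; hence $\psi$ kills $S$ and factors through $\bar\psi\colon F/S\to E$ over $G$. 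The map $f\mapsto(\bar\psi(f),f)$ then splits the pullback extension $E\times_G(F/S)\to F/S$, so $\inf_G^{F/S}(\alpha)=0$ and $\alpha\in\Image(\tra)$. Combining the two inclusions with the computation of $\Image(\tra)$ yields the claimed isomorphism.
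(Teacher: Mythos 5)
Your proposal is correct and follows essentially the same route as the paper: both hinge on the fact that classes in $\B_0(G)$ correspond to central extensions by $\bQ/\bZ$ in which lifts of commuting elements commute, so that the lift $F\to E$ of the projection kills $\langle K(F)\cap R\rangle$, combined with the five-term sequence and divisibility of $\bQ/\bZ$ to identify $\Image(\tra)$ with $\Hom\big(\frac{F'\cap R}{\langle K(F)\cap R\rangle},\bQ/\bZ\big)$. The only cosmetic differences are that you justify the commutativity-preserving characterization and the inclusion $\Image(\tra)\subseteq\B_0(G)$ directly (via abelian preimages in $F/S$ and naturality of inflation), where the paper cites external results and performs a symmetric-cocycle computation, and that you deduce surjectivity from $\inf(\alpha)=0$ plus exactness rather than exhibiting $\tra(\bar f)=[\alpha]$ explicitly.
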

We prove this result in Section \ref{Proofof Theorems}. \\

In the next result, we provide an exact sequence for $\B_0(G)$.  
Let $G=F/R$ and let  $N=S/R$ be a normal subgroup of $G$. Our  aim is to define a homomorphism 
$$\theta: \B_0(G) \to  \Hom\big(\frac{R \cap \langle K(F) \cap S\rangle}{\langle K(F) \cap R \rangle} , \mathbb{Q}/\mathbb{Z}\big)$$ and  prove the exactness of the following sequence which is the  dual of the exact sequence  given in \cite[Proposition 3.12]{Mora}. 
\begin{thm}\label{exactsequence_Bogo}
Let $G$ be a finite group with  presentation determined by $F/R$, and let $N=S/R$ be a normal subgroup of $G$.  Suppose $\mathbb{Q}/\mathbb{Z}$ is a trivial $G$-module.  For a suitable map $\theta$, the following sequence is exact:

\begin{align}\label{sequence}\tag{1.1}
 0 \to  \Hom\big(\frac{N\cap G'}{\langle  N \cap K(G)\rangle} , \mathbb{Q}/\mathbb{Z}\big)  \xrightarrow{\tra}  \B_0(G/N)  
\xrightarrow{\inf} \B_0(G)  \xrightarrow{\theta} \\
 \Hom\big(\frac{R \cap \langle K(F) \cap S\rangle}{\langle K(F) \cap R \rangle} , \mathbb{Q}/\mathbb{Z}\big)   \to 0.  \nonumber
\end{align}
\end{thm}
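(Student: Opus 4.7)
The plan is to transport the entire sequence via Theorem \ref{Hopftypeformula} to the level of finite abelian quotients of $F$, construct there a short four-term exact sequence, and recover the statement by Pontryagin duality. Write $G=F/R$ and $G/N\cong F/S$ with $R\subseteq S$, and set
\[
A=F'\cap R,\quad B=F'\cap S,\quad C=\langle K(F)\cap R\rangle,\quad D=\langle K(F)\cap S\rangle.
\]
Theorem \ref{Hopftypeformula} applied to the presentations $F/R$ and $F/S$ gives $\B_0(G)\cong\Hom(A/C,\bQ/\bZ)$ and $\B_0(G/N)\cong\Hom(B/D,\bQ/\bZ)$, while the right-hand term of the claimed sequence is by definition $\Hom((R\cap D)/C,\bQ/\bZ)$.

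To identify the leftmost term in this language, first use the Dedekind modular law (valid since $R\subseteq S$) to get $S\cap F'R=(F'\cap S)R$, so $N\cap G'=BR/R$; a similar lift of commutators from $G$ to $F$ shows $\langle N\cap K(G)\rangle=DR/R$. Combining with the isomorphism theorems and $B\cap R=F'\cap R=A$, one obtains
\[
\frac{N\cap G'}{\langle N\cap K(G)\rangle}\;\cong\;\frac{BR}{DR}\;\cong\;\frac{B}{DA}.
\]
Since $A\subseteq R$ one has $A\cap D=R\cap D$, and the natural inclusions and quotients assemble into a four-term sequence of finite abelian groups
\[
0\longrightarrow\frac{R\cap D}{C}\longrightarrow\frac{A}{C}\longrightarrow\frac{B}{D}\longrightarrow\frac{B}{DA}\longrightarrow 0,
\]
whose exactness is direct: the kernel of $A/C\to B/D$ is $(A\cap D)/C=(R\cap D)/C$, and the image of $A/C$ in $B/D$ is $DA/D$, the kernel of $B/D\twoheadrightarrow B/DA$.

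Applying $\Hom(-,\bQ/\bZ)$, which is exact on finite abelian groups, yields a four-term exact sequence of abelian groups; this is precisely the sequence (\ref{sequence}). I take $\theta$ to be the dual of the inclusion $(R\cap D)/C\hookrightarrow A/C$, i.e.\ the restriction $\varphi\mapsto\varphi|_{(R\cap D)/C}$ after identifying $\B_0(G)$ with $\Hom(A/C,\bQ/\bZ)$ via Theorem \ref{Hopftypeformula}. The main obstacle is the final matching of maps: one must verify that the dual of the canonical map $A/C\to B/D$ is precisely the inflation $\B_0(G/N)\to\B_0(G)$, and that the dual of $B/D\twoheadrightarrow B/DA$ is the transgression from the stated short sequence $1\to N\cap G'/\langle N\cap K(G)\rangle$-dual $\to \B_0(G/N)$. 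This is exactly where the concrete isomorphism built in the proof of Theorem \ref{Hopftypeformula}, produced through the transgression of an explicit central extension, is indispensable: its naturality with respect to the change of presentation $F/R\rightsquigarrow F/S$ translates directly into the compatibility with inflation and transgression needed to close the proof.
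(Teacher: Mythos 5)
Your reduction to the four-term sequence of finite abelian groups
\[
0\to \frac{R\cap\langle K(F)\cap S\rangle}{\langle K(F)\cap R\rangle}\to\frac{F'\cap R}{\langle K(F)\cap R\rangle}\to\frac{F'\cap S}{\langle K(F)\cap S\rangle}\to\frac{F'\cap S}{\langle K(F)\cap S\rangle(F'\cap R)}\to 0
\]
is correct, and the identification of the last term with $\frac{N\cap G'}{\langle N\cap K(G)\rangle}$ via the modular law does go through (one small repair: $A\cap D=R\cap D$ holds because $D=\langle K(F)\cap S\rangle\subseteq F'$, not because $A\subseteq R$, which only gives one inclusion). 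Dualizing into the injective module $\bQ/\bZ$ then produces an exact sequence whose four terms agree with those of (\ref{sequence}).

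The genuine gap is the step you yourself flag as ``the main obstacle'': you never verify that the duals of the two middle arrows are the maps $\inf$ and $\tra$ of the statement, and this is not a formality --- it is where essentially all of the content of the theorem lives. Knowing that the four groups fit into \emph{some} exact sequence says nothing about exactness at $\B_0(G/N)$ and at $\B_0(G)$ for the specific maps $\tra$ and $\inf$. Closing this requires two separate inputs, which is exactly how the paper proceeds: (a) exactness of the five-term sequence (\ref{Bogo_equation}) at $\B_0(G/N)$, i.e.\ $\ker(\inf)=\mathrm{Im}(\tra)$, together with the computation of $\mathrm{Im}(\tra)$ as $\Hom\big(\frac{N\cap G'}{\langle N\cap K(G)\rangle},\bQ/\bZ\big)$ (Lemma \ref{Trans}); and (b) the commutativity of the square relating the transgressions of the two presentations $F/\langle K(F)\cap S\rangle\to G/N$ and $F/\langle K(F)\cap R\rangle\to G$ with $\inf$ and with the map $\psi$ induced by $R\hookrightarrow S$ (the diagram in Theorem \ref{longexact}), which, combined with the surjectivity of both transgressions from Theorem \ref{Hopftypeformula}, yields $\mathrm{Im}(\inf)=\ker\theta$. ``Naturality of the Hopf isomorphism under change of presentation'' is a correct slogan for (b), but it must actually be checked on cocycle representatives ($\tra\circ\psi=\inf\circ\tra$), and (a) is not a naturality statement at all. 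As written, your argument establishes the four terms of (\ref{sequence}) but not its exactness with the stated maps.
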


\vspace{.2cm}

In \cite{verexact}, Vermani gave an exact sequence for the second cohomology group and using it he  provided necessary and sufficient conditions for the inflation homomorphism  to be an epimorphism or the zero map (for the second cohomology group).  We prove an analogous result
for the Bogomolov multiplier.
\vspace{.2cm}

\begin{thm}\label{inflation_epimorphism}
Let $G$ be a finite group with  presentation determined by $F/R$, and let $N=S/R$ be a normal subgroup of $G$.
\begin{enumerate}[label=(\roman*)]
\item The map $\inf: \B_0(G/N) \to \B_0(G)$ is an epimorphism if and only if 
$$R \cap \langle K(F) \cap S\rangle = \langle K(F) \cap R\rangle.$$

\item The map $\inf: \B_0(G/N) \to \B_0(G)$ is the zero map if and only if $$R \cap \langle K(F) \cap S\rangle =  F' \cap R.$$
\end{enumerate}
\end{thm}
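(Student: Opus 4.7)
My plan is to read off both parts directly from the exact sequence of Theorem \ref{exactsequence_Bogo}, combined with two standard facts about the functor $\Hom(-,\mathbb{Q}/\mathbb{Z})$: (a) $\mathbb{Q}/\mathbb{Z}$ is an injective cogenerator of the category of abelian groups, so $\Hom(A,\mathbb{Q}/\mathbb{Z})=0$ if and only if $A=0$; and (b) $\Hom(-,\mathbb{Q}/\mathbb{Z})$ is an exact functor.

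For part (i), exactness at $\B_0(G)$ in Theorem \ref{exactsequence_Bogo} gives $\Image\inf=\Ker\theta$, so $\inf$ is surjective if and only if $\theta$ is the zero map. Because $\theta$ is itself surjective, $\theta=0$ is equivalent to its codomain vanishing, and by fact (a) this is equivalent to
\[
\frac{R\cap\langle K(F)\cap S\rangle}{\langle K(F)\cap R\rangle}=0,
\]
i.e.\ $R\cap\langle K(F)\cap S\rangle=\langle K(F)\cap R\rangle$.

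For part (ii), the condition $\inf=0$ is equivalent to $\Image\inf=0$, i.e.\ to $\theta$ being injective. To rewrite this as the stated equality, I use Theorem \ref{Hopftypeformula} to identify $\B_0(G)$ with $\Hom\bigl(\frac{F'\cap R}{\langle K(F)\cap R\rangle},\mathbb{Q}/\mathbb{Z}\bigr)$. Since the same isomorphism is used to construct $\theta$ in Theorem \ref{exactsequence_Bogo}, under this identification $\theta$ should be the restriction map along the natural inclusion
\[
\frac{R\cap\langle K(F)\cap S\rangle}{\langle K(F)\cap R\rangle}\hookrightarrow\frac{F'\cap R}{\langle K(F)\cap R\rangle}
\]
(which is legitimate because $\langle K(F)\cap S\rangle\subseteq F'$). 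Applying fact (b) to the resulting short exact sequence of quotient groups then yields
\[
\Ker\theta\cong\Hom\Bigl(\frac{F'\cap R}{R\cap\langle K(F)\cap S\rangle},\mathbb{Q}/\mathbb{Z}\Bigr),
\]
and by fact (a) this vanishes if and only if $F'\cap R=R\cap\langle K(F)\cap S\rangle$, which is precisely the claim.

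The main obstacle is verifying that, under the Hopf-type isomorphism, $\theta$ is literally the restriction of homomorphisms along the above inclusion; this is a matter of carefully unwinding the construction of $\theta$ from the proof of Theorem \ref{exactsequence_Bogo}. If that identification is awkward to pin down directly, the equivalence in (ii) can instead be obtained by an order count, since Theorem \ref{exactsequence_Bogo}, Theorem \ref{Hopftypeformula}, and Pontryagin duality for finite abelian groups together force $|\B_0(G)|=|\Image\inf|\cdot|\Image\theta|$, after which $\inf=0$ is equivalent to $|\Image\theta|=\bigl|\tfrac{F'\cap R}{\langle K(F)\cap R\rangle}\bigr|$ and hence, by surjectivity of $\theta$ and the evident containment, to $F'\cap R=R\cap\langle K(F)\cap S\rangle$.
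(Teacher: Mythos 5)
Your proposal is correct, and for part (i) it coincides with the paper's argument verbatim: exactness at $\B_0(G)$ gives $\Image\inf=\Ker\theta$, and since $\theta$ is onto, $\theta=0$ forces its codomain, hence the quotient $\frac{R\cap\langle K(F)\cap S\rangle}{\langle K(F)\cap R\rangle}$, to vanish. For part (ii) your primary route is also the paper's route, and the step you flag as the ``main obstacle'' is exactly what the paper spends its proof on: it traces both the Hopf isomorphism $\gamma:\B_0(G)\to\Hom\bigl(\frac{F'\cap R}{\langle K(F)\cap R\rangle},\bQ/\bZ\bigr)$ and the map $\theta$ back to the same lift $\bar f\in\Hom\bigl(\frac{R}{\langle K(F)\cap R\rangle},\bQ/\bZ\bigr)$ with $\tra(\bar f)=\alpha$, showing $\gamma(\alpha)$ and $\theta(\alpha)$ are the restrictions of $\bar f$ to $\frac{F'\cap R}{\langle K(F)\cap R\rangle}$ and to $\frac{R\cap\langle K(F)\cap S\rangle}{\langle K(F)\cap R\rangle}$ respectively, so that $\theta$ really is restriction along the inclusion you describe. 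Your fallback via cardinalities is a legitimate way to avoid that bookkeeping entirely: it uses only surjectivity of $\theta$, the Hopf formula, Pontryagin duality, and the containment $R\cap\langle K(F)\cap S\rangle\subseteq F'\cap R$ (together with finiteness of $\frac{F'\cap R}{\langle K(F)\cap R\rangle}$, which holds because it is a quotient of the finite group $\frac{F'\cap R}{[F,R]}\cong \Ho_2(G,\bZ)$), and it is arguably cleaner than the paper's unwinding, at the cost of being special to the finite case.
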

\vspace{.2cm}
As an application of this result, we  show that, for extra-special  $p$-groups $G$, $\B_0(G)=0$; see Proposition \ref{extraspecial}, which leads to another proof of \cite[Proposition 2.1]{kangbogomolov}.

\vspace{.2cm}


In Theorem \ref{Centralproduct} we provide some necessary and sufficient conditions for the central product of groups to have trivial Bogomolov multiplier. In Theorem \ref{Heisenberg} we show that the  Bogomolov multiplier of every generalized discrete Heisenberg groups is trivial.
\vspace{.2cm}

Our final aim is to list all non-abelian groups $G$ of order $p^6$ for odd prime $p$,  having $\B_0(G)$ trivial. Groups of order $p^6$  were classified by James  \cite{James}. These  groups  are divided into isoclinism classes $\Phi_i, 1 \leq i \leq 43$. 
The isoclinism class $\Phi_1$ contains abelian groups and for finite abelian groups $G$, $\B_0(G)$ is trivial; see  \cite{swan}.
For groups $G$ of order $\leq p^4$, $\B_0(G)=0$; see \cite{bogomolov}.
The groups of order $p^5$ having trivial Bogomolov multiplier were found in \cite{Mora1}.
In \cite{chen}, the Bogomolov multipliers of some groups of order $p^6$ were recently studied, and with the help of a computer calculation, it has been conjectured that   $\B_0(\Phi_{15})=\B_0(\Phi_{28})=\B_0(\Phi_{29})=0$.
In the next result we confirm this conjecture  and  provide a complete list of  non-abelian groups of order $p^6$ for odd prime $p$, having a trivial Bogomolov multiplier.
\begin{thm}\label{groupsp6}
Let $G$ be a non-abelian group of order $p^6,$ where $p$ is an odd prime. Then $\B_0(G)=0$ if and only if  $G \in \Phi_j $ for $j \in \{2,3, \cdots, {43}  \}   \setminus    \{{10}, {18},{20}, {21},{36}, {38}, {39}\}$. 
\end{thm}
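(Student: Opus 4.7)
The plan is to combine the isoclinism invariance of $\B_0$ with James' classification of groups of order $p^6$ into the $43$ isoclinism classes $\Phi_1,\ldots,\Phi_{43}$, which reduces the problem to computing $\B_0$ on one representative of each non-abelian class $\Phi_j$, $2\le j\le 43$. The main computational tool is Moravec's description $\tilde{\B}_0(G)\cong M(G)/M_0(G)$, used either directly or through the equivalent Hopf-type formula of Theorem \ref{Hopftypeformula}, which identifies $\B_0(G)$ with $\Hom((F'\cap R)/\langle K(F)\cap R\rangle,\mathbb{Q}/\mathbb{Z})$ for any free presentation $F/R$. A substantial portion of the statement is already in the literature: triviality for many of the forty-two non-abelian classes was obtained in \cite{Mora1,Mora,Mora2}, and nontriviality of $\B_0(\Phi_j)$ for $j\in\{10,18,20,21,36,38,39\}$ originates in Bogomolov's counterexamples \cite{bogomolov} and subsequent work. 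What remains is the conjecture of \cite{chen} that $\B_0(\Phi_{15})=\B_0(\Phi_{28})=\B_0(\Phi_{29})=0$.

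For the classes on the triviality list I will proceed on two fronts. Whenever a representative admits a central product decomposition $G=H\cdot N$ with $\B_0(H)=\B_0(N)=0$, which occurs often because the Bogomolov multiplier is known to vanish for all groups of order $\le p^5$ outside a well-understood list, I will invoke Theorem \ref{Centralproduct} after checking either the condition $\xi(H_1\cap H')\subseteq N_1\cap K(N)$ or the existence of the extending homomorphism $\eta: H\to N$. For the remaining classes I will compute directly with James' presentations, rewriting each commutator relator as a product of elements of $K(F)\cap R$ by standard commutator identities in $F$, thereby proving $F'\cap R=\langle K(F)\cap R\rangle$ and hence $\B_0(G)=0$ via Theorem \ref{Hopftypeformula}.

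For the seven exceptional classes I will exhibit explicit elements of $(F'\cap R)\setminus \langle K(F)\cap R\rangle$, typically obtained as pullbacks of Bogomolov's original obstructions through suitable surjections, so that the nontrivial classes remain visible in $\B_0(G)$. The main obstacle will be the three conjectured classes $\Phi_{15},\Phi_{28},\Phi_{29}$: here the commutator subgroups are comparatively large and no short central-product description is evident. My plan is to fix a minimal generating set from the James presentation, compute a spanning set of $M(G)$ inside $G\wedge G$ via Schur-type formulae, and then for each spanning element display a pair of commuting elements of $G$ whose wedge reproduces it modulo $M_0(G)$. Ensuring that this bookkeeping is complete and uniform across the parameter families appearing in each of these classes, especially in $\Phi_{15}$, will be the most delicate step.
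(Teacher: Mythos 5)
Your proposal follows essentially the same route as the paper: reduce to one representative per isoclinism class via Moravec's isoclinism invariance, quote \cite[Theorem 1.1]{chen} for everything except $\Phi_{15},\Phi_{28},\Phi_{29}$, and for those three classes write a general element of $M(G)$ as a product of generating wedges, constrain the exponents using the defining relations, and exhibit commuting pairs whose wedges account for the surviving combinations modulo $M_0(G)$ --- exactly what the paper does, working in the isomorphic model $M^*(G)/M_0^*(G)$ inside $[G,G^{\phi}]\le\tau(G)$. The extra machinery you propose for the other thirty-nine classes (central products, direct Hopf-formula computations, explicit obstructions for the seven exceptional classes) is unnecessary given the citation of \cite{chen}, and your worry about ``parameter families'' in $\Phi_{15}$ is moot since isoclinism invariance lets you fix the single representative $\Phi_{15}(1^6)$.
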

We prove this result in the last section.

\section{Preliminaries}\label{preliminaries}

Let $D$ be a divisible abelian group regarded as a trivial $G$-module. Consider an exact sequence
\[
1 \rightarrow N \rightarrow  G\rightarrow G/N \rightarrow 1.
\]
Then the Hochschild-Serre spectral sequence \cite{Hoch} for cohomology of groups yields the following exact sequence
\begin{equation*}
0 \to \Hom(G/N, D) \xrightarrow{\inf} \Hom(G, D) \xrightarrow{\res} \Hom( N,D) \xrightarrow{\tra} \Ho^2(G/N, D)  \xrightarrow{\inf} \Ho^2(G, D),
\end{equation*}
where $\tra:\Hom( N,D) \to \Ho^2(G/N, D) $ is the transgression homomorphism
 given by $f \mapsto \tra(f) = [\alpha]$, with
$$\alpha(\overline{x},\overline{y}) = f(\mu (\overline{x})\mu(\bar{y})\mu(\bar{xy})^{-1}), \,\, \text{for all}\,\,  \overline{x}, \overline{y} \in G/A, $$
for a section $\mu: G/N \rightarrow G$.
The inflation homomorphism, $\inf : \Ho^2(G/N,D)   \to  \Ho^2(G, D) $ is given by $[\alpha] \mapsto \inf([\alpha]) = [\beta]$, where $\beta(x,y) = \alpha(xN,yN)$, for all $x,y \in G$. 
The map $\res$ denotes restriction homomorphism.

\vspace{.2cm}

Now we check  that these  above maps can be defined for $\B_0(G)$ also.
First we define 
$$\res':\Hom(G, \mathbb{Q}/\mathbb{Z})    \to  \Hom(\frac{N}{\langle K(G) \cap N\rangle} , \mathbb{Q}/\mathbb{Z})  $$
by $\res'(f)(\bar{n})=f(n)$ for $f \in \Hom(G, \mathbb{Q}/\mathbb{Z})$ and $\bar{n} \in \frac{N}{\langle K(G) \cap N\rangle}$.
It is easy to see that $\res'$ is a well defined homomorphism.

Next we define the transgression map $$\tra:\Hom\big(\frac{N}{\langle K(G) \cap N\rangle} , \mathbb{Q}/\mathbb{Z}\big)  \to  \B_0(G/N)$$ 
given by $\chi \mapsto \tra(\chi) = [\alpha]$, with
$$\alpha(\overline{x},\overline{y}) = 
\chi(\overline{\mu (\overline{x})\mu(\bar{y})\mu(\bar{xy})^{-1}}), \,\, \text{for all}\,\,  \overline{x}, \overline{y} \in G/N, $$
for a section $\mu: G/N \rightarrow G$.
If $\chi \in \Hom(\frac{N}{\langle K(G) \cap N\rangle}, \bQ/\bZ)$, then we show that $\tra(\chi) \in \B_0(G/N)$. 
Let $A/N$ be a  finite abelian subgroup of $G/N$ and let $\mu: G/N \to G$ be a section. For $\bar{a}, \bar{b} \in A/N$,
\begin{eqnarray*}
\tra(\chi)(\bar{a}, \bar{b})&=&\chi(\overline{\mu(\bar{a})\mu(\bar{b})\mu(\bar{a}\bar{b})^{-1}})=\chi\big(\overline{[\mu(\bar{a})^{-1}, \mu(\bar{b})^{-1}].  \mu(\bar{b})\mu(\bar{a})\mu(\bar{b}\bar{a})^{-1}} \big)
\\
&=&\chi(\overline{[\mu(\bar{a})^{-1}, \mu(\bar{b})^{-1}]})  \chi(\overline{\mu(\bar{b})\mu(\bar{a})\mu(\bar{b}\bar{a})^{-1}} )
\\
&=& \chi(\overline{\mu(\bar{b})\mu(\bar{a})\mu(\bar{b}\bar{a})^{-1}})  \text{ as } [\mu(\bar{a})^{-1}, \mu(\bar{b})^{-1}]\in \langle K(G) \cap N\rangle\\
&=&\tra(\chi)(\bar{b}, \bar{a}) .
\end{eqnarray*}
Thus, $\tra(\chi)$ is a symmetric $2$-cocycle on finite abelian groups $A/N$. Since $\mathbb{Q}/\mathbb{Z}$   is a divisible abelian group,  $[\tra(\chi)|_{A/N \times A/N}]=1$.
Thus $\tra $ is a well-defined homomorphism.
It is also easy to see that the inflation map $\inf:\B_0(G/N) \to \B_0(G)$ is a well defined homomorphism.

The following result describes the Bogomolov multiplier for the direct product of
groups.
\begin{thm}$($\cite[Theorem 1.4]{kang}$)$\label{Directproduct}
Let $G_1$ and $G_2$ be  finite groups.   The restriction map  
$\res: \B_0(G_1 \times G_2) \rightarrow \B_0(G_1) \times \B_0(G_2)$ is an isomorphism. 
\end{thm}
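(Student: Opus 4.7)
The plan is to pass through the classical Schur/Künneth decomposition
$$\Ho^2(G_1 \times G_2,\mathbb{Q}/\mathbb{Z}) \;\cong\; \Ho^2(G_1,\mathbb{Q}/\mathbb{Z}) \oplus \Ho^2(G_2,\mathbb{Q}/\mathbb{Z}) \oplus \Hom\!\big((G_1/G_1') \otimes (G_2/G_2'),\, \mathbb{Q}/\mathbb{Z}\big),$$
in which the first two summands come from inflation along the projections $\pi_i \colon G_1 \times G_2 \to G_i$, and a class in the third summand is represented by a $2$-cocycle of the form $\gamma_\phi\big((x_1,x_2),(y_1,y_2)\big) = \phi(\overline{x_1}\otimes \overline{y_2})$ for some bilinear $\phi$. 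Against this decomposition I would verify surjectivity of $\res$ by lifting along the projections, and injectivity by showing that the third summand is faithfully detected after restriction to the bi-cyclic abelian subgroups $\langle x_1\rangle \times \langle x_2\rangle$.

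For surjectivity, given $([\alpha_1],[\alpha_2]) \in \B_0(G_1) \times \B_0(G_2)$ I set $[\alpha] := \inf_{\pi_1}[\alpha_1] + \inf_{\pi_2}[\alpha_2]$. Since $\pi_i \circ \iota_i = \mathrm{id}_{G_i}$ while $\pi_i \circ \iota_j$ is trivial when $i\neq j$, the restriction $\res^{G_1\times G_2}_{G_i}$ sends $[\alpha]$ to $[\alpha_i]$, so every pair is hit. To confirm $[\alpha]\in \B_0(G_1\times G_2)$, take any finite abelian $A \leq G_1 \times G_2$; its projections $\pi_i(A)$ are finite abelian subgroups of $G_i$, and naturality gives
$$\res^{G_1\times G_2}_{A}\inf_{\pi_i}[\alpha_i] \;=\; \inf^{A}_{\pi_i(A)}\res^{G_i}_{\pi_i(A)}[\alpha_i] \;=\; 0$$
because $[\alpha_i]\in \B_0(G_i)$. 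Summing over $i$ yields $\res^{G_1\times G_2}_A[\alpha]=0$, as required.

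For injectivity, suppose $[\alpha]\in \B_0(G_1\times G_2)$ restricts to zero on both $G_1$ and $G_2$. Writing $[\alpha]=\inf_{\pi_1}[\alpha_1] + \inf_{\pi_2}[\alpha_2] + [\gamma_\phi]$ via the decomposition above, the $[\gamma_\phi]$ summand is killed by either restriction (the pairing $\phi$ vanishes whenever an input is trivial), so the hypothesis forces $[\alpha_1]=[\alpha_2]=0$ and $[\alpha]=[\gamma_\phi]$. The key step is to force $\phi=0$: for arbitrary $x_1\in G_1,\,x_2\in G_2$, the subgroup $\langle x_1\rangle\times\langle x_2\rangle$ is finite abelian, and the restriction of $[\gamma_\phi]$ to it is the class in $\Ho^2(\langle x_1\rangle\times \langle x_2\rangle,\mathbb{Q}/\mathbb{Z})$ whose associated commutator pairing equals $\phi(\overline{x_1}\otimes\overline{x_2})$; since $[\alpha]\in \B_0$, this class must vanish, and the non-degeneracy of the commutator pairing on a cyclic-by-cyclic abelian group forces $\phi(\overline{x_1}\otimes\overline{x_2})=0$ for every pair, i.e.\ $\phi\equiv 0$ and $[\alpha]=0$.

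The main obstacle is making the injectivity step rigorous — one needs the explicit identification of the third Künneth summand with the cup product $\Ho^1(G_1,\mathbb{Q}/\mathbb{Z})\otimes \Ho^1(G_2,\mathbb{Q}/\mathbb{Z})\to \Ho^2(G_1\times G_2,\mathbb{Q}/\mathbb{Z})$, together with the fact that this cup product restricts nontrivially to $\langle x_1\rangle\times \langle x_2\rangle$ whenever $\phi(\overline{x_1}\otimes\overline{x_2})\neq 0$. If that bookkeeping becomes unwieldy, an attractive alternative is to work on the Moravec side: the exterior square decomposes as $(G_1\times G_2)\wedge(G_1\times G_2)\cong (G_1\wedge G_1) \oplus (G_2\wedge G_2) \oplus ((G_1/G_1')\otimes (G_2/G_2'))$, and every element of the third summand has the form $x_1\wedge x_2$ with $[x_1,x_2]=1$, hence lies in $M_0(G_1\times G_2)$; this immediately yields $\tilde{\B}_0(G_1\times G_2)\cong\tilde{\B}_0(G_1)\oplus\tilde{\B}_0(G_2)$, and dualizing via Moravec's theorem gives the stated isomorphism.
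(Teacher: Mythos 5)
This statement is not proved in the paper at all: it is quoted with a citation to \cite[Theorem 1.4]{kang}, so there is no internal argument to measure yours against. That said, your proposal is essentially correct, and both routes you sketch are the ones found in the literature. The cohomological route is sound: surjectivity via $\inf_{\pi_1}[\alpha_1]+\inf_{\pi_2}[\alpha_2]$ works because any abelian $A\leq G_1\times G_2$ has abelian projections $\pi_i(A)$ and restriction factors through them; and for injectivity the only point that genuinely needs care is the one you flag, namely that for a finite abelian group $A$ one has $\Ho^2(A,\mathbb{Q}/\mathbb{Z})\cong\Hom(A\wedge A,\mathbb{Q}/\mathbb{Z})$ via the antisymmetrization $(u,v)\mapsto\gamma(u,v)-\gamma(v,u)$ (the symmetric/Ext part dies because $\mathbb{Q}/\mathbb{Z}$ is divisible), so that vanishing of the class of $\gamma_\phi$ on $\langle x_1\rangle\times\langle x_2\rangle$ really does force $\phi(\overline{x_1}\otimes\overline{x_2})=0$; since elementary tensors generate $(G_1/G_1')\otimes(G_2/G_2')$, this kills $\phi$. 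One small caveat: you should also record the (easy) fact that $\res$ is well defined, i.e.\ that restricting a class of $\B_0(G_1\times G_2)$ to $G_i$ lands in $\B_0(G_i)$, which holds because every abelian subgroup of $G_i$ is one of $G_1\times G_2$. The alternative you mention at the end is in fact the cleaner argument and the one most consonant with the rest of this paper: in $(G_1\times G_2)\wedge(G_1\times G_2)$ the cross summand is generated by elements $(x_1,1)\wedge(1,x_2)$ with commuting entries, hence lies in $M_0(G_1\times G_2)$, which gives $\tilde{\B}_0(G_1\times G_2)\cong\tilde{\B}_0(G_1)\times\tilde{\B}_0(G_2)$ directly and dualizes to the stated isomorphism; that is essentially the proof in the cited source.
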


\subsection{Another description of $\B_0(G)$}\label{Moravec}
The \emph{non-abelian exterior square} of a group $G$, denoted by $G \wedge G$, is a  group generated by the symbols $x \wedge y$, $x,y \in G$ with the following relations:
$$xy \wedge z=(y^{-1}  xy\wedge y^{-1} zy)(y \wedge z),$$
$$x \wedge yz=(x \wedge z)(z^{-1} xz\wedge z^{-1} yz), $$
$$x \wedge  x=1.$$
It follows from the definition that the map  $f: G\wedge G \rightarrow G'$, defined on the generators by $f(g \wedge h)=[g,h]$, is an epimorphism. Let $M(G)=\ker f$.
Miller \cite{Miller}  proved that $M(G)$ is isomorphic to the second homology group $\Ho_2(G,\mathbb Z)$.  
We define $M_0(G)$, a subgroup of $M(G)$, generated by the set 
$\{ x \wedge y \mid [x,y]=1\}$.
We refer the reader to \cite{brown, Miller}  for more details.

\vspace{.2cm}

A different description of $G \wedge G$ was  introduced in \cite{rocco}, which is  more useful for evaluating the exterior square of  group $G$.
By $G^{\phi}$ we denote  the isomorphic copy of  group $G$ via the isomorphism $\phi$. Consider the group
$$\tau (G):=\langle G,G^{\phi} \mid \Re, \Re ^{\phi}, [g_1,g_2^{\phi}]^g=[g_1^g, (g_2^g)^{\phi}]=[g_1,g_2^\phi]^{g^\phi}, [g,g^\phi]=1\;\text{for all}\;  g,g_1,g_2 \in G\rangle$$
where $\Re, {\Re}^\phi$ are the defining relations of $G$ and $G^{\phi}$ respectively.    It follows from \cite[Proposition 16]{blythmorse} that the map $\Phi: G \wedge G \rightarrow [G,G^\phi]$ defined by 
$$\Phi(g  \wedge h)=[g,h^\phi],\;\; g,h \in G$$ 
is an isomorphism. 
Let $\eta:=f \circ\Phi^{-1}: [G,G^\phi] \to G'$ and $M^*(G)=\ker \eta=\Phi(M(G))$, 
$M_0^*(G)=\Phi(M_0(G))$.
It follows from \cite{Mora} that 
$$\B_0(G) \cong M^*(G)/M_0^*(G).$$
This will be used for computing $\B_0(G)$ in Section \ref{Bogop6}.

\vspace{.2cm}

\begin{lemma}$($See \cite{blythmorse}$)$\label{lemma1}
For a group $G$, the following properties hold in $\tau (G)$.
\begin{enumerate}[label=(\roman*)]
\item If $G$ is nilpotency class $c$, then $\tau(G)$ has nilpotency class at most $c+1$.
\item If $G'$ is of nilpotency class $c$, then $[G,G^\phi]$ has nilpotency class $c$  or $c+1$.
\item If $G$ has nilpotency class $\leq 2$, then $[G,G^\phi]$ is abelian.
\item $[g_1^\phi,g_2,g_3]=[g_1,g_2^\phi,g_3]=[g_1,g_2,g_3^\phi]=[g_1^\phi,g_2^\phi,
g_3]=[g_1^\phi,g_2,g_3^\phi]=[g_1,g_2^\phi,g_3^\phi]$ for all $g_1,g_2,g_3 \in G$.
\item $[[g_1,g_2^\phi],[h_1,h_2^\phi]]=[[g_1,g_2],[h_1,h_2]^\phi]$ for all $g_1,g_2,h_1,h_2 \in G$.
\item $[[g_1,g_2^{\phi}],[g_2,g_1^{\phi}]]=1$ for all $g_1,g_2\in G$.
\item $[g_1,g_2^\phi]=[g_1^\phi,g_2]$ for all $g_1,g_2 \in G$.
\end{enumerate}
\end{lemma}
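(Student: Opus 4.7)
The lemma is a collection of structural identities in the group $\tau(G)$; each is proved by a direct computation from the defining relations together with standard commutator calculus. The unifying observation is that equating the two halves of the defining relation
\[
[g_1, g_2^\phi]^g = [g_1^g, (g_2^g)^\phi] = [g_1, g_2^\phi]^{g^\phi}
\]
yields $[g_1, g_2^\phi]^g = [g_1, g_2^\phi]^{g^\phi}$, so on the subgroup $[G, G^\phi]$ the conjugation actions of $g \in G$ and of its copy $g^\phi \in G^\phi$ coincide. Rather than proceeding in the stated order, I would prove the items in the order (vii), (iv), (iii), (v), (vi), (i), (ii), since the later items all feed off the earlier ones.

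For (vii), I would expand $1 = [g_1 g_2, (g_1 g_2)^\phi] = [g_1 g_2, g_1^\phi g_2^\phi]$ using the standard identities $[ab, c] = [a, c]^b [b, c]$ and $[a, bc] = [a, c] [a, b]^c$, use $[g_i, g_i^\phi] = 1$ to kill two of the four factors that appear, and apply the agreement observation to the resulting conjugation exponents; after cancellation one is left with $[g_1, g_2^\phi] \cdot [g_2, g_1^\phi] = 1$, which gives (vii). For (iv), the agreement observation immediately delivers $[g_1, g_2^\phi, g_3] = [g_1, g_2^\phi, g_3^\phi]$, while (vii) together with the fact that $\phi$ is an isomorphism (so that $[g_1^\phi, g_2^\phi] = [g_1, g_2]^\phi$) allows one to shuffle $\phi$'s between the inner two positions; chaining these two kinds of moves exhausts the six variants listed in (iv).

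For (iii), when $G$ has class $\leq 2$ we have $[h_1, h_2] \in Z(G)$, so the defining relation gives $[g_1, g_2^\phi]^{[h_1, h_2]} = [g_1^{[h_1, h_2]}, (g_2^{[h_1, h_2]})^\phi] = [g_1, g_2^\phi]$; the agreement observation then lets us replace $[h_1, h_2^\phi]$ by $[h_1, h_2]$ when it acts on $[G, G^\phi]$, so each commutator of generators of $[G, G^\phi]$ vanishes and the whole subgroup is abelian. For (v), I would expand both $[[g_1, g_2^\phi], [h_1, h_2^\phi]]$ and $[[g_1, g_2], [h_1, h_2]^\phi]$ by repeated use of the defining relation and (vii) until they reduce to the common form. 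For (vi), (vii) combined with commutator antisymmetry gives $[g_2, g_1^\phi] = [g_1, g_2^\phi]^{-1}$, whence $[[g_1, g_2^\phi], [g_2, g_1^\phi]] = [[g_1, g_2^\phi], [g_1, g_2^\phi]^{-1}] = 1$ trivially. Finally, (i) and (ii) follow by induction: (iv) shows that an iterated commutator in $\tau(G)$ of weight $k+1$ involving at least one $\phi$ can be rewritten so that its vanishing reduces to the vanishing of a weight-$k$ iterated commutator in $G$, giving $\tau(G)$ class $\leq c+1$ and $[G, G^\phi]$ class at most $c+1$ when $G'$ has class $c$.

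The main technical obstacle lies in the bookkeeping for (iv) and (v), where multiple applications of the two halves of the defining relation, of (vii), and of the routine commutator identities $[ab, c] = [a, c]^b [b, c]$ and $[a, bc] = [a, c] [a, b]^c$ must be coordinated carefully; the conjugation-agreement observation is the single most important simplification throughout and is what prevents the calculations from exploding.
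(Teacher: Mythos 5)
The paper does not actually prove this lemma --- it is quoted from \cite{blythmorse} (ultimately Rocco's construction) --- so there is no in-paper argument to compare against; the only question is whether your sketch is sound. Your central observation, that equating the two halves of the defining relation forces conjugation by $g$ and by $g^\phi$ to agree on $[G,G^\phi]$, is exactly the right starting point, and your derivations of (vii) (from $1=[g_1g_2,(g_1g_2)^\phi]$), of (vi), and of (iii) are correct.

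However, your proof of (iv) has a genuine gap, and it propagates to (v), (i) and (ii). The two moves you allow are: (a) toggling $\phi$ on the third entry, which requires the inner commutator to lie in $[G,G^\phi]$, and (b) applying (vii), either to the inner pair (moving a single $\phi$ between the first two entries) or to the outer commutator (which sends $[[g_1,g_2],g_3^\phi]$ to $[[g_1^\phi,g_2^\phi],g_3]$ and back). Under these moves the six expressions split into two orbits, namely $\{[g_1^\phi,g_2,g_3],\,[g_1,g_2^\phi,g_3],\,[g_1^\phi,g_2,g_3^\phi],\,[g_1,g_2^\phi,g_3^\phi]\}$ and $\{[g_1,g_2,g_3^\phi],\,[g_1^\phi,g_2^\phi,g_3]\}$, and nothing in your sketch connects them: move (a) does not apply to $[[g_1,g_2],g_3^\phi]$ because $[g_1,g_2]\in G$ is not in $[G,G^\phi]$, and (vii) never converts $[g_1^\phi,g_2^\phi]$ into $[g_1,g_2^\phi]$ --- these are genuinely distinct elements, as the retraction $\tau(G)\to G\times G$ (with $g\mapsto(g,1)$, $g^\phi\mapsto(1,g)$) kills $[g_1,g_2^\phi]$ but not $[g_1^\phi,g_2^\phi]$. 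The identity $[g_1,g_2,g_3^\phi]=[g_1,g_2^\phi,g_3]$ is in fact the hardest part of (iv) and needs a separate computation, e.g.\ writing $[g_1,g_2^\phi,g_3]=[g_1,g_2^\phi]^{-1}[g_1^{g_3},(g_2^{g_3})^\phi]$ and expanding $g_i^{g_3}=g_i[g_i,g_3]$ via the standard commutator expansions together with the centralizing property of the elements $g^{-1}g^\phi$. Your reduction of (v) terminates precisely at this missing identity (with $g_3$ replaced by $[h_1,h_2]$), and your inductive arguments for (i) and (ii) invoke the full strength of (iv); note also that (ii) asserts the class is $c$ or $c+1$, and you only address the upper bound, not the lower bound coming from the surjection $[G,G^\phi]\to G'$.
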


We  define $[x_1, x_2, \ldots, x_n]=[[x_1, x_2, \ldots, x_{n-1}], x_n]$, for $x_1,x_2, \ldots, x_n \in G, n \geq 2$.

\begin{lemma}$($ \cite[Lemma 3.7]{Mora1} $)$\label{lemma2}
Let $G$ be a nilpotent group of class $\leq 5$, and let $n$ be a positive integer.
For $x,y \in G$,
$$
[x^n,y]=[x,y]^n[x,y,x]^{n \choose 2}[x,y,x,x]^{n \choose 3} [x,y,x,x,x]^{n \choose 4} [x,y,x,[x,y]]^{\sigma(n)},
$$
where $\sigma(n)=\frac{n(n-1)(2n-1)}{6}.$
\end{lemma}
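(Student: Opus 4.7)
The plan is to exploit that the Bogomolov multiplier is an isoclinism invariant (a theorem of Bogomolov--Moravec), so it suffices to pick one representative from each isoclinism class $\Phi_j$ in James's classification and compute $\B_0$ on the nose. For each class I would work inside $\tau(G)$ using the description $\B_0(G)\cong M^*(G)/M_0^*(G)$, since the relations in $\tau(G)$ collected in Lemma \ref{lemma1} together with the commutator expansion in Lemma \ref{lemma2} are tailored to collapse products of the generators $[g,h^\phi]$ to a small normal form when $G$ has nilpotency class at most $5$ (and every group of order $p^6$ has class at most $5$).

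For the positive side (the $34$ classes for which $\B_0=0$ is asserted) I would peel off easy cases first. Any $G$ that decomposes as a direct product $G_1\times G_2$ of groups already known to have trivial Bogomolov multiplier (in particular any group of order $\le p^4$, any abelian group, and the groups of order $p^5$ classified in \cite{Mora1}) is handled instantly by Theorem \ref{Directproduct}. Many of the remaining classes are central products whose central factors are of order $\le p^5$ with trivial Bogomolov multiplier, and there Theorem \ref{Centralproduct} applies once I verify condition (i) or (ii); typically the natural homomorphism $\eta\colon H\to N$ used in James's presentation restricts to $\xi$ on the amalgamated central subgroup, so (ii) is available. The generalized discrete Heisenberg groups, whose triviality is established in Theorem \ref{Heisenberg}, take care of several further classes. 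For the handful of classes that resist these structural reductions, I would compute $[G,G^\phi]$ and $M^*(G)$ directly from the power-commutator presentation given by James: write a general element of $M^*(G)$ in normal form using Lemmas \ref{lemma1} and \ref{lemma2}, then show that each generator lies in $M_0^*(G)=\langle x\wedge y\mid [x,y]=1\rangle$ by exhibiting for each basis commutator $[x,y^\phi]$ an explicit commuting pair $(u,v)$ with $[u,v^\phi]$ equal (or cohomologous modulo $M_0^*(G)$) to it.

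For the negative side (the $7$ excluded classes $\Phi_{10},\Phi_{18},\Phi_{20},\Phi_{21},\Phi_{36},\Phi_{38},\Phi_{39}$) I would exhibit an explicit element of $M^*(G)\setminus M_0^*(G)$. Three of these ($\Phi_{10},\Phi_{18},\Phi_{20}$) are already present in Bogomolov's original counterexamples in \cite{bogomolov}, and $\Phi_{21}$ is of the same type, so I would record their non-triviality by citing those arguments. For $\Phi_{36},\Phi_{38},\Phi_{39}$ I would use the cohomological description together with Theorem \ref{exactsequence_Bogo}: a well-chosen normal subgroup $N$ makes $G/N$ one of the known non-trivial Bogomolov classes of order $\le p^5$ and forces the inflation map to detect a non-zero element, which via the Hopf-type formula of Theorem \ref{Hopftypeformula} is realized by a commutator in $F'\cap R$ that is provably not in $\langle K(F)\cap R\rangle$.

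The main obstacle will be the handful of truly case-by-case computations: for the resistant classes, verifying that every generator of $M^*(G)$ lies in $M_0^*(G)$ requires producing the commuting pairs explicitly, and separating the $\Phi_{36},\Phi_{38},\Phi_{39}$ from the nearby isoclinism classes (whose presentations differ only by a twist of a single structure constant) requires a careful choice of $N$ in the exact sequence of Theorem \ref{exactsequence_Bogo} to pick up precisely the quotient whose Bogomolov multiplier is non-trivial. These computations also confirm the conjecture from \cite{chen} that $\B_0(\Phi_{15})=\B_0(\Phi_{28})=\B_0(\Phi_{29})=0$, which will drop out of the central-product reduction once the right amalgamation is identified.
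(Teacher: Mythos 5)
Your proposal does not address the statement at hand. The statement is Lemma \ref{lemma2}, a Hall--Petrescu-type commutator expansion
$[x^n,y]=[x,y]^n[x,y,x]^{\binom{n}{2}}[x,y,x,x]^{\binom{n}{3}}[x,y,x,x,x]^{\binom{n}{4}}[x,y,x,[x,y]]^{\sigma(n)}$
valid in any nilpotent group of class at most $5$. What you have written is instead a proof strategy for Theorem \ref{groupsp6}, the classification of groups of order $p^6$ with trivial Bogomolov multiplier. Nothing in your text engages with the identity itself: there is no induction on $n$, no commutator collection, no verification of the exponents $\binom{n}{k}$ or of $\sigma(n)=\frac{n(n-1)(2n-1)}{6}$. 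So as a proof of Lemma \ref{lemma2} the proposal is vacuous.

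For the record, the paper does not prove this lemma either; it is quoted from \cite[Lemma~3.7]{Mora1}. A genuine proof would go by induction on $n$: write $[x^{n+1},y]=[x\cdot x^n,y]=[x,y]^{x^n}\,[x^n,y]$, expand the conjugate $[x,y]^{x^n}=[x,y]\,[x,y,x^n]$, apply the inductive hypothesis to both $[x^n,y]$ and $[x^n, \cdot]$-type factors, and use the class-$\le 5$ hypothesis to discard all commutators of weight $\ge 6$ while collecting the surviving terms; Pascal's rule $\binom{n}{k}+\binom{n}{k-1}=\binom{n+1}{k}$ accounts for the binomial exponents, and the identity $\sigma(n+1)-\sigma(n)=n^2$ (i.e.\ $\sigma(n)=\sum_{k=1}^{n-1}k^2$) accounts for the exponent of $[x,y,x,[x,y]]$. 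One must also justify the commutations performed during collection, which is where the bound on the nilpotency class is used. None of this appears in your proposal, so you should either supply that argument or, as the paper does, cite the result.
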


\begin{lemma}$($\cite[Lemma 2.2]{Mora1}  $)$\label{lemma3}
Let $G$ be a finite polycyclic group  with polycyclic generating sequence $\{g_1,g_2,\ldots , g_n\}$. Then the group $[G,G^\phi]$, a subgroup of $\tau(G)$, is generated by the set $\{[g_i, g_j^\phi], i,j=1,2,\ldots,n, i>j\}$.
\end{lemma}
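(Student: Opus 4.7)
The plan is to show that the subgroup $H := \langle [g_i, g_j^\phi] : 1 \le j < i \le n \rangle$ of $\tau(G)$ exhausts $[G,G^\phi]$. By construction, $[G,G^\phi]$ is generated as a subgroup by the set $\{[x, y^\phi] : x, y \in G\}$, so it is enough to show that every such mixed commutator lies in $H$.

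I would begin by extracting the standard bilinear expansion identities from the defining relations of $\tau(G)$, namely
$[xy, z^\phi] = [x, z^\phi]^y [y, z^\phi]$ and $[x, (yz)^\phi] = [x, z^\phi]\, [x, y^\phi]^{z^\phi}$, together with their analogues for inverse entries. Next I would derive the antisymmetry relation $[x, y^\phi] = [y, x^\phi]^{-1}$ by applying the relation $[xy,(xy)^\phi]=1$, expanding via the identities above, cancelling the trivial terms $[x,x^\phi]=[y,y^\phi]=1$, and then using the defining relation $[g_1,g_2^\phi]^{g^\phi}=[g_1,g_2^\phi]^g$ to replace $G^\phi$-conjugation by $G$-conjugation. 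Combined with $[g_i, g_i^\phi]=1$, antisymmetry upgrades the generating set: every $[g_i, g_j^\phi]$, regardless of the order of $i$ and $j$, already lies in $H$.

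The main step is then an induction on $\ell(x)+\ell(y)$, where $\ell(\cdot)$ denotes word length in $\{g_1^{\pm 1},\ldots,g_n^{\pm 1}\}$. The expansion identities rewrite $[x, y^\phi]$ as a product of conjugates of mixed commutators with strictly shorter entries; the base case is covered by the previous paragraph. The principal obstacle is that such conjugates are not of the generating form: via the defining relation $[g_1, g_2^\phi]^g = [g_1^g, (g_2^g)^\phi]$ they reappear as new mixed commutators whose entries $g_i^g, g_j^g$ may have greater word length than the originals, threatening circularity in the induction.

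I would resolve this obstacle by a secondary induction tailored to the polycyclic structure of $G$: writing $g_i^g$ and $g_j^g$ in polycyclic normal form $g_1^{a_1}\cdots g_n^{a_n}$ and re-applying the expansion identities, each resulting factor either lies directly in $H$ by the extended basic fact or admits a strict decrease in an auxiliary complexity measure (for instance, lexicographic order on the exponent vectors, refined by word length). Iterating this bookkeeping terminates and drives every mixed commutator $[x, y^\phi]$ into $H$, yielding $[G,G^\phi]=H$ as required.
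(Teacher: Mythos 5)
The paper itself offers no proof of this lemma---it is quoted directly from \cite[Lemma 2.2]{Mora1}, which in turn rests on the structural results of Blyth and Morse \cite{blythmorse}---so your argument has to stand entirely on its own. Your preliminary steps are sound: the expansion identities, the derivation of $[y,x^\phi]=[x,y^\phi]^{-1}$ from $[xy,(xy)^\phi]=1$ together with the relation $[g_1,g_2^\phi]^{g^\phi}=[g_1,g_2^\phi]^{g}$, and the observation that antisymmetry places every $[g_i,g_j^\phi]$ (for any order of $i,j$) in $H$ are all correct. You have also correctly located the real difficulty: the expansion identities produce conjugates, and $[g_i,g_j^\phi]^{g}=[g_i^{g},(g_j^{g})^\phi]$ is not visibly an element of $H$.

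The resolution you propose is where the whole content of the lemma lives, and it does not work as stated. You assert that after rewriting $g_i^{g}$ and $g_j^{g}$ in normal form and re-expanding, each factor either lands in $H$ or strictly decreases an auxiliary measure such as ``lexicographic order on the exponent vectors, refined by word length.'' No such decrease holds: the entries $g_i,g_j$ of the original generator already have minimal normal forms (their exponent vectors are standard basis vectors), whereas $g_i^{g}$ may have an arbitrarily long normal form, and expanding $[g_i^{g},(g_j^{g})^\phi]$ spawns new conjugating elements that must themselves be processed; so the proposed measure can increase at the very first step and termination is an unproved assertion. Note also that your argument uses the polycyclic hypothesis only through the existence of normal forms and never invokes the subnormality of the tails $G_i=\langle g_i,\ldots,g_n\rangle$, which is precisely what the known proofs exploit: one inducts on the length $n$ of the polycyclic series, first obtaining the result for $N=\langle g_2,\ldots,g_n\rangle$ (transported into $\tau(G)$ via functoriality), and then uses the normality of $N$ in $G$ to control all conjugates arising in the expansion of $[g_1^{a}h,(g_1^{b}k)^\phi]$ with $h,k\in N$. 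You would need to replace your ``secondary induction'' by an argument of this type, or by some other mechanism with a genuinely decreasing invariant, for the proof to be complete.
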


\section{Exact sequences}\label{Proofof Theorems}
In this section we prove Theorem \ref{Hopftypeformula}, Theorem \ref{exactsequence_Bogo} and Theorem \ref{inflation_epimorphism}.
Following the proof of \cite[Theorem 11.9.1]{vermani}, it is easy to check that the following sequence is exact:
\begin{align}\label{Bogo_equation}\tag{3.1}
0 \to \Hom(G/N, \mathbb{Q}/\mathbb{Z}) \xrightarrow{\inf}  \Hom(G, \mathbb{Q}/\mathbb{Z})    \xrightarrow{\res'}  \Hom\big(\frac{N}{\langle K(G) \cap N\rangle} , \mathbb{Q}/\mathbb{Z}\big)  \\
  \xrightarrow{\tra}  \B_0(G/N) \xrightarrow{\inf} \B_0(G).  \nonumber
\end{align}

\vspace{.2cm}

\begin{lemma}\label{Trans}
Let $1 \to N \to G \to G/N \to 1$ be an exact sequence. The image of the corresponding transgression map 
$$
\tra: \Hom(N/\langle K(G) \cap N \rangle, \mathbb{Q}/\mathbb{Z}) \to \B_0(G/N)
$$
is isomorphic to  $\Hom\big(\frac{G'\cap N}{\langle K(G) \cap N \rangle}, \mathbb{Q}/\mathbb{Z}\big)$.
\end{lemma}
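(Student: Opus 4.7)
The plan is to combine the exact sequence (\ref{Bogo_equation}) with the injectivity (i.e., divisibility) of $\mathbb{Q}/\mathbb{Z}$. By exactness of (\ref{Bogo_equation}), the image of $\tra$ is canonically isomorphic to the cokernel $\Hom(N/\langle K(G)\cap N\rangle, \mathbb{Q}/\mathbb{Z})/\Image \res'$. Hence the task reduces to identifying $\Image \res'$ explicitly and then extracting this cokernel.

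First, I would show
$$\Image \res' \;=\; \Hom\Bigl(\tfrac{N}{G'\cap N},\, \mathbb{Q}/\mathbb{Z}\Bigr),$$
viewed as a subgroup of $\Hom(N/\langle K(G)\cap N\rangle, \mathbb{Q}/\mathbb{Z})$ via inflation along the natural surjection $N/\langle K(G)\cap N\rangle \twoheadrightarrow N/(G'\cap N)$ (which makes sense since $\langle K(G)\cap N\rangle \subseteq G'\cap N$). The inclusion $\subseteq$ is immediate: every $f \in \Hom(G,\mathbb{Q}/\mathbb{Z})$ factors through $G^{ab}=G/G'$, so $\res'(f)$ vanishes on the image of $G'\cap N$. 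The reverse inclusion is where the main content lies: given $\chi : N/(G'\cap N) \to \mathbb{Q}/\mathbb{Z}$, identify $N/(G'\cap N) \cong NG'/G'$ as a subgroup of $G/G'$ and use divisibility of $\mathbb{Q}/\mathbb{Z}$ to extend $\chi$ to a character $\bar f : G/G' \to \mathbb{Q}/\mathbb{Z}$; inflating along $G \twoheadrightarrow G/G'$ produces $f \in \Hom(G,\mathbb{Q}/\mathbb{Z})$ with $\res'(f)=\chi$.

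To conclude, I apply the exact functor $\Hom(-,\mathbb{Q}/\mathbb{Z})$ (exact because $\mathbb{Q}/\mathbb{Z}$ is injective as an abelian group) to the short exact sequence
$$0 \longrightarrow \frac{G'\cap N}{\langle K(G)\cap N\rangle} \longrightarrow \frac{N}{\langle K(G)\cap N\rangle} \longrightarrow \frac{N}{G'\cap N} \longrightarrow 0,$$
obtaining a short exact sequence of Hom-groups whose first term coincides with $\Image \res'$ by the previous step. Therefore
$$\Image \tra \;\cong\; \frac{\Hom(N/\langle K(G)\cap N\rangle,\mathbb{Q}/\mathbb{Z})}{\Image \res'} \;\cong\; \Hom\Bigl(\tfrac{G'\cap N}{\langle K(G)\cap N\rangle},\,\mathbb{Q}/\mathbb{Z}\Bigr),$$
as claimed.

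The only genuinely non-formal step is the surjectivity of $\res'$ onto $\Hom(N/(G'\cap N), \mathbb{Q}/\mathbb{Z})$ in the second paragraph, where the divisibility of $\mathbb{Q}/\mathbb{Z}$ is essential; everything else is diagram-chasing once (\ref{Bogo_equation}) has been established.
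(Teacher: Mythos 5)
Your proposal is correct and follows essentially the same route as the paper: both identify $\ker(\tra)=\Image(\res')$ via the exact sequence (\ref{Bogo_equation}) as exactly the characters of $N/\langle K(G)\cap N\rangle$ that kill $G'\cap N$, with the key non-formal step in each case being the extension of a character from $NG'/G'$ to $G/G'$ using divisibility of $\mathbb{Q}/\mathbb{Z}$. The paper phrases this subgroup as the kernel of the restriction to $(N\cap G')/\langle K(G)\cap N\rangle$ while you phrase it as the image of inflation from $N/(G'\cap N)$, but by exactness of $\Hom(-,\mathbb{Q}/\mathbb{Z})$ these are the same identification.
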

\begin{proof}
We have a surjective homomorphism 
$$\eta:\Hom\big(\frac{N}{\langle K(G) \cap N\rangle} , \mathbb{Q}/\mathbb{Z}\big) \to \Hom\big(\frac{N\cap[G,G]}{\langle K(G) \cap N\rangle} , \mathbb{Q}/\mathbb{Z}\big)$$ induced by the inclusion map $N\cap[G,G] \hookrightarrow N$. 
Let $$K=\{f \in \Hom\big(\frac{N}{\langle K(G) \cap N\rangle} , \mathbb{Q}/\mathbb{Z}\big) \mid f \text{ can be extended to a homomorphism } G \to  \mathbb{Q}/\mathbb{Z}\}.$$ 
By the exact sequence (\ref{Bogo_equation}),  $K= \mathrm{Im}(\res')=\ker(\tra)$. Now we show that $K=\ker \eta$.  It will imply that
\[
\mathrm{Im}(\tra) \cong 
\frac{\Hom\big(\frac{N}{\langle K(G) \cap N\rangle} , \mathbb{Q}/\mathbb{Z}\big) }{K}
\cong \Hom\big(\frac{N\cap G'}{\langle K(G) \cap N\rangle} , \mathbb{Q}/\mathbb{Z}\big).
\]

Let $f \in \Hom\big(\frac{N}{\langle K(G) \cap N\rangle} , \mathbb{Q}/\mathbb{Z}\big)$ such that $f \in K$. Then $\frac{N\cap [G,G]}{\langle K(G) \cap N\rangle} \subseteq \ker f$ and so $f \in \ker \eta$. 
Conversely, let  $g \in \ker \eta$. Then $g$ can be viewed as a homomorphism $\frac{N}{N \cap[G,G]} \to \mathbb{Q}/\mathbb{Z}$.  As 
$\frac{N}{N \cap[G,G]} \cong \frac{N[G,G]}{[G,G]}$ and  $\mathbb{Q}/\mathbb{Z}$ is a divisible abelian group, $g$ can be extended to a homomorphism  $G/[G,G] \to \mathbb{Q}/\mathbb{Z}$. Hence $g \in K$. Hence $K=\ker \eta$.
\end{proof}

\bigskip

Recall from \cite{moravec2012unramified} that a central extension $1   \to A\to H \to G \to 1$ is a {\it central CP extension} of $G$ by $A$ if every commuting pair of elements of $G$ lifts to a  commuting pair in $H$.\\

\noindent \textbf{Proof of Theorem \ref{Hopftypeformula}.}

\begin{proof}
By \cite[Theorem 3.1 and Proposition 2.4]{Urban}, there is a bijective correspondence between $\B_0(G)$ and central CP extensions of $G$ by $A=\mathbb{Q}/\mathbb{Z}$.
Let  $\alpha$ be a $2$-cocycle of $G$ such that $[\alpha] \in \B_0(G)$.  Then there is a   central CP extension
$$ 1 \to A \to  H \xrightarrow{\mu} G \to  1$$  corresponding to $\alpha$.
Since $F$ is a free group, there is a  homomorphism $f:F\to H$
such that  the following is commutative. Here   $\bar{f}$  denotes  the restriction of $f$ on $R$. 
 \[
 \xymatrix{
 1 \ar[r] & R \ar[r] \ar[d]^{\bar{f}} & F \ar[r]^{\pi}    \ar[d]^{f}  & G \ar[d]^{id}  \ar[r] &1 \\
 1 \ar[r] & A \ar[r] &  H \ar[r]^{\mu}    &  G \ar[r]&  1 \\
 }
 \] 
Let $[a,b] \in K(F) \cap R$. Then $\pi([a,b])=[\pi(a),\pi(b)]=\mu([f(a),f(b)])=1$ and so $[f(a),f(b)] \in  K(H)\cap A$.
By \cite[Proposition 3.3]{Urban}, it follows that $[f(a),f(b)]=f([a,b]) =1$. 
Hence $\langle K(F) \cap R \rangle \subseteq \ker f$ and the commutative diagram in Figure 1 is induced from  that given above. 
 \[
 \xymatrix{
 1 \ar[r] & R/\langle K(F) \cap R \rangle \ar[r] \ar[d]^{\bar{f}} & F/\langle K(F) \cap R \rangle \ar[r]^{\;\;\;\;\;\;\;\;\;\;\;\pi}    \ar[d]^{f}  & G \ar[d]^{id}  \ar[r] &1 \\
 1 \ar[r] & A \ar[r] &  H \ar[r]^{\mu}    &  G \ar[r]&  1 \\
 }
 \] 
 \[\text{Figure 1}\] \\
Now we show that $\tra(\bar{f})=[\alpha].$ 
Let $s$ be a section of $\pi$. Then  $f \circ  s$ is a section of $\mu$. Thus $\alpha$ is cohomologous to a cocycle 
$$\beta(x,y)=f(s(x)s(y)s(xy)^{-1})=\bar{f}(s(x)s(y)s(xy)^{-1})=\tra(\bar{f}).$$
Hence the map $\tra: \Hom(R/\langle K(F) \cap R \rangle, A) \to \B_0(G) $ is surjective.
Now by Lemma \ref{Trans},   
$$\B_0(G) \cong\Hom(\frac{F'\cap R}{\langle K(F) \cap R \rangle}, \mathbb{Q}/\mathbb{Z})  .$$
\end{proof}

\begin{thm}\label{longexact}
Let $F/R$ be a finite  presentation of a finite group $G$, let $N=S/R$ be a normal subgroup of $G$ and $\mathbb{Q}/\mathbb{Z}$ is a trivial $G$-module. For a suitable map $\theta$, the following sequence is exact:
\begin{eqnarray*}
0 \to \Hom(G/N, \mathbb{Q}/\mathbb{Z}) \xrightarrow{\inf}  \Hom(G, \mathbb{Q}/\mathbb{Z})    \xrightarrow{\res'}  \Hom\big(\frac{N}{\langle K(G) \cap N\rangle} , \mathbb{Q}/\mathbb{Z}\big)    \\
\xrightarrow{\tra}  \B_0(G/N) \xrightarrow{\inf} \B_0(G)  \xrightarrow{\theta}  \Hom\big(\frac{R \cap \langle K(F) \cap S\rangle}{\langle K(F) \cap R \rangle} , \mathbb{Q}/\mathbb{Z}\big)   \to 0.
\end{eqnarray*}
\end{thm}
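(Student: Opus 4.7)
The plan is to reduce the exactness of the right half of the sequence to the left-exactness of $\Hom(-,\mathbb{Q}/\mathbb{Z})$ applied to a short exact sequence of abelian groups, leveraging Theorem \ref{Hopftypeformula}. The first five terms of the sequence are already exact by \eqref{Bogo_equation}, so what remains is to define $\theta$, to verify that the image of $\inf$ equals $\ker(\theta)$ at the spot $\B_0(G)$, and to prove surjectivity of $\theta$.

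First I would use Theorem \ref{Hopftypeformula} to identify
\[
\B_0(G) \cong \Hom\Big(\tfrac{F'\cap R}{\langle K(F)\cap R\rangle}, \mathbb{Q}/\mathbb{Z}\Big),\quad \B_0(G/N) \cong \Hom\Big(\tfrac{F'\cap S}{\langle K(F)\cap S\rangle}, \mathbb{Q}/\mathbb{Z}\Big),
\]
using that $F/R$ presents $G$ and $F/S$ presents $G/N$. Since $R\subseteq S$ gives $\langle K(F)\cap R\rangle\subseteq \langle K(F)\cap S\rangle$, there is a natural homomorphism
\[
\iota: \tfrac{F'\cap R}{\langle K(F)\cap R\rangle} \longrightarrow \tfrac{F'\cap S}{\langle K(F)\cap S\rangle}.
\]
The key structural fact I would then verify is that under the Hopf identification, $\inf:\B_0(G/N)\to\B_0(G)$ is dual to $\iota$. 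This is done by running the construction in the proof of Theorem \ref{Hopftypeformula} simultaneously for both presentations: a CP extension $1\to \mathbb{Q}/\mathbb{Z}\to H\to G\to 1$ representing a class in $\B_0(G)$ lifts via a common map $F\to H$, and the cocycle on $G$ one obtains by inflating a class from $\B_0(G/N)$ coming via transgression from a homomorphism $\bar f:S/\langle K(F)\cap S\rangle\to\mathbb{Q}/\mathbb{Z}$ agrees with the cocycle produced from the restriction $\bar f|_{R/\langle K(F)\cap R\rangle}$.

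Next I would observe that every generator of $\langle K(F)\cap S\rangle$ is a commutator in $F$, so $\langle K(F)\cap S\rangle\subseteq F'$; hence $(F'\cap R)\cap\langle K(F)\cap S\rangle = R\cap\langle K(F)\cap S\rangle$, and consequently
\[
\ker(\iota) = \tfrac{R\cap \langle K(F)\cap S\rangle}{\langle K(F)\cap R\rangle}.
\]
This yields a short exact sequence
\[
0 \to \tfrac{R\cap\langle K(F)\cap S\rangle}{\langle K(F)\cap R\rangle} \hookrightarrow \tfrac{F'\cap R}{\langle K(F)\cap R\rangle} \xrightarrow{\iota} \iota\Big(\tfrac{F'\cap R}{\langle K(F)\cap R\rangle}\Big) \to 0,
\]
and I would then define $\theta$ to be the composition of the Hopf isomorphism for $\B_0(G)$ with the restriction homomorphism along the inclusion $\frac{R\cap\langle K(F)\cap S\rangle}{\langle K(F)\cap R\rangle} \hookrightarrow \frac{F'\cap R}{\langle K(F)\cap R\rangle}$.

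Finally, applying the exact functor $\Hom(-,\mathbb{Q}/\mathbb{Z})$ — here using that $\mathbb{Q}/\mathbb{Z}$ is divisible, hence injective in the category of abelian groups — to the displayed short exact sequence yields simultaneously the surjectivity of $\theta$ and the equality between the image of $\inf$ and $\ker(\theta)$: a homomorphism $\phi$ on $\frac{F'\cap R}{\langle K(F)\cap R\rangle}$ vanishes on $\ker(\iota)$ precisely when it factors through $\iota$, and every such factorization extends, by divisibility, to the larger group $\frac{F'\cap S}{\langle K(F)\cap S\rangle}$, i.e.\ arises from an element of $\B_0(G/N)$. The main obstacle I anticipate is the compatibility step in the second paragraph — confirming that inflation is literally dual to $\iota$ under the Hopf isomorphism — since the latter is cocycle-theoretic with a choice of section, so one must carefully track independence from this choice as well as naturality with respect to the projection $G\to G/N$.
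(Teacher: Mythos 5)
Your argument is correct and is essentially the paper's proof in Pontryagin-dual packaging: the paper builds the same commutative diagram of five-term sequences for the two presentations $F/\langle K(F)\cap R\rangle$ and $F/\langle K(F)\cap S\rangle$, defines $\theta$ through the quotient $\Hom\big(\frac{R}{\langle K(F)\cap R\rangle},\mathbb{Q}/\mathbb{Z}\big)/\mathrm{Im}(\psi)$, and its Claim~1 is exactly your divisibility/injectivity step for the short exact sequence involving $\ker(\iota)=\frac{R\cap\langle K(F)\cap S\rangle}{\langle K(F)\cap R\rangle}$. The compatibility you flag as the main obstacle (that inflation is dual to $\iota$, i.e.\ $\inf\circ\tra=\tra\circ\psi$) is precisely the commuting square the paper also asserts without detailed verification, so your proposal supplies everything the paper's proof does.
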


\begin{proof}
\bigskip
Our  first goal is to define the map $\theta: \B_0(G)  \to \Hom\big(\frac{R \cap \langle K(F) \cap S\rangle}{\langle K(F) \cap R \rangle} , \mathbb{Q}/\mathbb{Z}\big)$.
Let $$\psi: \Hom(\frac{S}{\langle K(F) \cap S \rangle},\bQ/\bZ)  \to  \Hom(\frac{R}{\langle K(F) \cap R \rangle},\bQ/\bZ) $$ be the map induced by the inclusion map 
$ R \to S.$ 
Consider the restriction homomorphism $$\delta : \Hom(\frac{R}{\langle K(F) \cap R \rangle},\bQ/\bZ)  \to \Hom(\frac{R \cap \langle K(F) \cap S \rangle}{\langle K(F) \cap R \rangle} ,\bQ/\bZ). $$ 
Since $\bQ/\bZ$ is a divisible abelian group and $\frac{R}{\langle K(F) \cap R \rangle}$ is an abelian group, every homomorphism  $\frac{R \cap \langle K(F) \cap S \rangle}{\langle K(F) \cap R \rangle} \to \bQ/\bZ$ can be extended to a homomorphism  
$$\frac{R }{\langle K(F) \cap R \rangle}\to \bQ/\bZ$$
and thus $\delta$ is an epimorphism.  It is easy to check that $\Image{\psi} = \Ker \delta$,
so
\begin{align}\label{Impsi}\tag{3.2}
\frac{\Hom(\frac{R}{\langle K(F) \cap R \rangle},\bQ/\bZ) }{\Image(\psi)} \cong \Hom(\frac{R \cap \langle K(F) \cap S \rangle}{\langle K(F) \cap R \rangle} ,\bQ/\bZ).
\end{align}


Now by Theorem \ref{Hopftypeformula} and  by (\ref{Bogo_equation}), we have the  commutative diagram in Figure 2, where each row is an exact sequence:\\
\[
\xymatrix{
	\Hom(  \frac{F}{\langle K(F) \cap S \rangle},\bQ/\bZ)  \ar[r]^{\;\; \res' } \ar[d] & \Hom(\frac{S}{\langle K(F) \cap S \rangle},\bQ/\bZ)  \ar[r]^{\;\;\;\;\;\;\tra}    \ar[d]^{\psi}  & \B_0(G/N)  \ar[d]^{\inf} \to 0 \\
	\Hom(  \frac{F}{\langle K(F) \cap R \rangle} ,\bQ/\bZ) \ar[r]^{\;\; \res' } &  \Hom(\frac{R}{\langle K(F) \cap R \rangle},\bQ/\bZ)  \ar[r]^{\;\;\;\;\;\;\tra}    &  \B_0(G)  \to 0.  \\
}
\]  \[\text{Figure 2}\] \\
Figure 2 implies  that  the map $\phi: \frac{\Hom(\frac{R}{\langle K(F) \cap R \rangle},\bQ/\bZ)}{\Image(\res ')} \to   \B_0(G)$  given by 
\[
\phi(f+\Image({\res')})=\tra(f)
\]
is an isomorphism.

Observe that $\Image(\res') \subseteq \Image(\psi)$. Hence, we have the natural projection map 
\[
\tau:  \frac{\Hom(\frac{R}{\langle K(F) \cap R \rangle},\bQ/\bZ)}{\Image(\res')}  \to  \frac{\Hom(\frac{R}{\langle K(F) \cap R \rangle},\bQ/\bZ)}{\Image(\psi)}. 
\]
Thus we define the map 
\[
\theta:  \B_0(G) \to  \frac{\Hom(\frac{R}{\langle K(F) \cap R \rangle},\bQ/\bZ) }{\Image(\psi)}
\]
by $\theta=\tau \circ \phi^{-1}$.  
Since $\tau$ is an epimorphism, $\theta$ is also an epimorphism.

Now we prove that $\mathrm{Im}(\inf)=\ker \theta$.
Let $[\alpha] \in \B_0(G)$ be such that $\theta([\alpha])=0$. Then there is $f \in \Hom\big(\frac{R}{\langle K(F) \cap R \rangle}, \bQ/\bZ\big) \cap \mathrm{Im}(\psi)$ such that  $$\phi(f+\mathrm{Im}(\res'))=\tra(f)=[\alpha].$$
Thus there exists $g \in \Hom\big(\frac{S}{\langle K(F) \cap S \rangle}, \bQ/\bZ)$ such that $\psi(g)=f$ and by the commutativity of Figure 2, 
$$[\alpha]=\tra(f)=\tra(\psi(g))=\inf (\tra(g)).$$ Hence  $[\alpha] \in \mathrm{Im}(\inf: \B_0(G/N) \to \B_0(G))$.
Conversely, assume $$[\alpha] \in \mathrm{Im}(\inf: \B_0(G/N) \to \B_0(G)).$$
Then  there exists  $h \in \Hom\big(\frac{S}{\langle K(F) \cap S \rangle}, \bQ/\bZ\big)$ such that
$$[\alpha]=\tra(\psi(h))  \text{ and }\phi\big(\psi(h)+\Image(\res') \big)=[\alpha].$$ Hence
$\theta([\alpha])=\tau \circ \phi^{-1}([\alpha])=\tau(\psi(h)+\mathrm{Im}(\res'))=\psi(h)+\mathrm{Im}(\psi)=0$. 
Therefore $\mathrm{Im}(\inf)=\ker \theta$.
Now by exact sequence (\ref{Bogo_equation}) and by (\ref{Impsi}), our result follows.
\end{proof}

\noindent \textbf{Proof of Theorem \ref{exactsequence_Bogo}.}
\begin{proof}
Result follows from  Lemma \ref{Trans} and Theorem \ref{longexact}.
\end{proof}

\noindent \textbf{Proof of Theorem \ref{inflation_epimorphism}.}

\begin{proof}
$(i)$ By the  exact sequence (\ref{sequence}),    $\inf: \B_0(G/N) \to \B_0(G)$ is an epimorphism if and only if   $$\theta:\B_0(G)\to \Hom(\frac{R \cap \langle K(F) \cap S \rangle}{\langle K(F) \cap R \rangle} ,\bQ/\bZ)$$ 
is the zero map. Since $\theta$ is surjective,   $R \cap \langle K(F) \cap S \rangle=\langle K(F) \cap R \rangle$.

\vspace{.5cm}

$(ii)$   Let $\alpha \in \B_0(G)$. By   Figure 1 and by the proof of Theorem \ref{Hopftypeformula}, there exists  
$\bar{f} \in \Hom(\frac{R}{\langle K(F) \cap R \rangle}, \bQ/\bZ)$ such that  $\tra(\bar{f})=[\alpha]$,
and
the  isomorphism
$$\gamma:\B_0(G) \to \Hom\big(\frac{F' \cap R}{\langle K(F) \cap R \rangle}, \bQ/\bZ\big)$$ is   given by 
$\gamma(\alpha)=\bar{f}|_{\frac{F'\cap R}{\langle K(F) \cap R \rangle}}$.

Hence, according to the proof of  Theorem \ref{longexact}, we see that
$\phi(\bar{f}+\mathrm{Im}(\res'))=\alpha$ and $\theta(\alpha)=\bar{f}+\mathrm{Im}(\psi)$.
Thus, considering the isomorphism in (\ref{Impsi}), we obtain
$$\theta:\B_0(G)\to \Hom(\frac{R \cap \langle K(F) \cap S \rangle}{\langle K(F) \cap R \rangle} ,\bQ/\bZ)$$   given by
$\theta(\alpha)=\bar{f}|_{\frac{R \cap \langle K(F) \cap S \rangle}{\langle K(F) \cap R \rangle}}.$

Thus identifying  $\B_0(G)$ with  $\Hom\big(\frac{F' \cap R}{\langle K(F) \cap R \rangle}, \bQ/\bZ\big)$ via $\gamma$, we see that
\begin{eqnarray*}
\theta: \Hom(\frac{F' \cap R}{\langle K(F) \cap R \rangle}, \bQ/\bZ) \to \Hom(\frac{R \cap \langle K(F) \cap S \rangle}{\langle K(F) \cap R \rangle} ,\bQ/\bZ)
\end{eqnarray*}
is the restriction homomorphism. Hence, by the  exact sequence (\ref{sequence}),
 $\inf$ is the zero map if and only if  the restriction map $\theta$
 is an isomorphism, i.e.,  $F'\cap R=R \cap \langle K(F) \cap S\rangle$.
\end{proof}

The following was proved in  \cite[Proposition 2.1]{kangbogomolov} and \cite[Corollary 3.2]{michailov}.
Here we give a different proof, as an application of Theorem \ref{inflation_epimorphism}.

\begin{proposition}\label{extraspecial}
If $G$ is an extra-special $p$-group  of order $p^{2n+1},$  for $n\geq 1$, then $\B_0(G)=0$.
\end{proposition}

\begin{proof}
Let $G$ be an extra-special $p$-group of order $p^{2n+1}$. Then there is an extra-special $p$-group of exponent $p$ in the isoclinism class of $G$. Using \cite{Mora}, it is enough to prove our result for this group of exponent $p$.

First  consider the Heisenberg group of order $p^3$, say $H=F/R$, where $F=\langle x, y \rangle$ is the free group generated by two elements and $R$ is the normal closure in $F$ of $\{\gamma_3(F), x^p,y^p\}$. 
We deduce that
\begin{eqnarray*}
[x^iy^j, x^ry^s]&=&[x^i, x^ry^s] [y^j, x^ry^s]=[x^i,y^s][y^j, x^r ],\\
 &=& [x,y]^{is-jr}=[x^{is-jr},y] \pmod{\langle K(F) \cap R\rangle}.
\end{eqnarray*}
Hence, 
$$F'=K(F) \pmod{\langle K(F) \cap R\rangle}.$$ 
Therefore, $F' \cap R=\langle K(F) \cap R \rangle$. If   $N=S/R$ is a  normal subgroup of $G$, then  $F'\cap R=\langle K(F)\cap R \rangle =R\cap\langle K(F)\cap S\rangle$ as $\langle K(F)\cap R \rangle \subseteq R\cap\langle K(F)\cap S\rangle  \subseteq F'\cap R$.  Theorem \ref{inflation_epimorphism}(i--ii) implies that inf $: B_0(H/N) \to B_0(H)$ is both surjective and the zero map from $B_0(H/N)$  to $B_0(H)$. So $B_0(H)=0$.

Now we use induction to prove our result. 
Consider an extra-special $p$-group $G$ of exponent $p$ and order $p^{2n+1}, n>1$. Then $G$ is a central product of $H_1$ and $H_2$ such that
$H_1$ is an extra-special $p$-group of order $p^{2n-1}$ of exponent $p$ and $H_2=H$. 
Let $G=F/R$ and $H_1=S_1/R, H_2=S_2/R$. Then $F=S_1S_2$ and $[S_1,S_2] \subseteq R$. 
Take $N=\frac{(S_1'\cap S_2')R}{R}=S/R$, a normal subgroup of $G$.
By the induction hypothesis, 
\begin{align}\label{11}\tag{3.3}
S_1'= K(S_1) \pmod{\langle K(S_1) \cap R\rangle}
\end{align}
\begin{align}\label{22}\tag{3.4}
S_2'= K(S_2)  \pmod{\langle K(S_2) \cap R\rangle}.
\end{align}
Let   $f_1,f_2 \in F, s_1,s_1', s_{11}, s_{11}' \in S_1,$ $ s_2,s_2',s_{22},s_{22}' \in S_2$ such that
  $f_1=s_1s_2, f_2=s_1's_2', f_3=s_{11}s_{22}, f_4=s_{11}'s_{22}'$.
  
Now by \eqref{11} and \eqref{22}, 
$[s_1,s_1'][s_{11},s_{11}']=  [s_{111},s_{111}'] ~\text{and} ~
[s_2,s_2'][s_{22},s_{22}'] = [s_{222},s_{222}']$ for some $s_{111}, s_{111}' \in S_1,$  and $s_{222},s_{222}' \in S_2$,
so
\begin{eqnarray*}
[f_1,f_2][f_3,f_4]&&= [s_1s_2, s_1's_2'][s_{11}s_{22}, s_{11}'s_{22}']\\
&&= [s_1,s_1'][s_2,s_2'] [s_{11},s_{11}'][s_{22},s_{22}']  \pmod{\langle K(F) \cap R\rangle}\\
&&= [s_{111},s_{111}'][s_{222},s_{222}']=[s_{111}s_{222},s_{111}'s_{222}']  \pmod{\langle K(F) \cap R\rangle}
\end{eqnarray*}
Therefore,  $F'=K(F) \pmod{\langle K(F) \cap R\rangle}$ and so
$F'\cap R=\langle K(F)\cap R \rangle =R\cap\langle K(F)\cap S\rangle$.
Then by Theorem \ref{inflation_epimorphism}(i--ii),  inf $: B_0(G/N) \to B_0(G)$ is both surjective and the zero map from $B_0(G/N)$ to $B_0(G)$. So $B_0(G)=0$.
\end{proof}

\section{Triviality of $\B_0(G)$ for central products} \label{Bogocentral}
A group $G$ is a central product of  groups $H$ and $N$  if there exist $H_1 \leq Z(H)$,  $N_1 \leq Z(N)$ and an isomorphism $\xi: H_1 \to N_1$ such that $G \cong \frac{H \times N}{Z}$, where $Z=\{(a, \xi(a)^{-1})\mid a \in H_1\}$.
Kang and Kunyavski{\u\i} \cite{kangbogomolov} asked the following.\\

\noindent
\textbf{Question 1:} $($\cite[Question 3.3(i)]{kangbogomolov}$)$
Let $G$ be a central product of groups $H$ and $N$ such that  $\B_0(H)=\B_0(N)=0$. Is it true that $\B_0(G)=0$?\\

\noindent 
Rai  gave a negative answer to this question in \cite[Proof of Theorem 1.1]{Rai}.
As an approach to this question,  Michailov \cite[Theorem 3.1]{michailov} proved that 
if  $\eta: H \to N$ is a homomorphism such that  $\xi=\eta|_{H_1}$ is an isomorphism and $\B_0(H)=\B_0(N)=0$, then $\B_0(H/H_1)=0$ implies $\B_0(G)=0$. 
We establish the following and so generalize his result.

\begin{thm}\label{Centralproduct}
	Let $G$ be a central product of groups $H$ and $N$ via the map $\xi$ such that 
	$\B_0(H)=\B_0(N)=0$. Suppose any one of the following conditions is satisfied.
	\begin{enumerate}[label=(\roman*)]
		\item  $\xi(H_1\cap H') \subseteq  N_1\cap K(N)$. 
		\item There is a homomorphism $\eta: H \to N$ such that $\xi=\eta|_{H_1}$.
	\end{enumerate}
	Then $\B_0(G)=0$ if and only if $B_0(H/H_1)=0$.
\end{thm}

\begin{proof}
Let $G=\frac{H \times N}{Z}$, where $Z=\{(a, \xi(a)^{-1})\mid a \in H_1\}.$ 
It  follows from Theorem \ref{Directproduct} that
 $$\B_0(H\times N)=\B_0(H) \times \B_0(N)=0.$$
By (\ref{sequence}), we have the following exact sequence
\begin{align*}
 0 \to  \Hom\big(\frac{Z\cap(H' \times N')}{\langle Z \cap( K(H) \times K(N))\rangle} , \mathbb{Q}/\mathbb{Z}\big)  \xrightarrow{\tra}  \B_0(G)  
\xrightarrow{\inf} \B_0(H) \times \B_0(N)=0,  
\end{align*}
which implies that  $$\B_0(G) \cong \frac{Z\cap(H' \times N')}{\langle Z \cap( K(H) \times K(N))\rangle}.$$\\

$(i)$   Suppose $\B_0(G)=0$. Then   $Z\cap(H' \times N')=\langle Z \cap( K(H) \times K(N))\rangle $.
Let $a \in H_1 \cap H'$.
Then  $\xi(a)\in  N_1\cap K(N)  \subseteq N_1\cap N'.$
Therefore, $$(a, \xi(a)^{-1}) \in Z\cap(H' \times N')=\langle Z \cap( K(H) \times K(N))\rangle.$$ 
Hence $a \in \langle H_1 \cap K(H)  \rangle$ and so  $ H_1 \cap H'=\langle H_1 \cap K(H)  \rangle$. 
Thus $\B_0(H/H_1)=0$ follows from  the exact sequence  (\ref{sequence}),  \\

Conversely, suppose  $\B_0(H/H_1)=0$.  Then, by the exact sequence (\ref{sequence}), 
$H_1 \cap H'=\langle  H_1 \cap K(H) \rangle$.
Let  $(a, \xi(a)^{-1}) \in Z \cap  (H' \times N')$. Since  $a \in H_1\cap H'=\langle  H_1 \cap K(H) \rangle$,  we deduce that
$a=\prod_{i=1}^k [h_i, h_i'], ~h_i, h'_i \in H$ and $ [h_i, h_i'] \in H_1$.
Then  $\xi([h_i,h'_i]) \in  N_1\cap K(N)$ and 
$$(a, \xi(a)^{-1})=\prod_{i=1}^k\Big([h_i,h'_i], \xi([h_i,h'_i])^{-1}\Big) \in \langle Z \cap  (K(H)\times K(N))\rangle.$$
Hence,  $$ Z \cap  (H' \times N')=\langle Z \cap  (K(H)\times K(N))\rangle.$$
Our result follows.
\\

$(ii)$ 
Suppose $\B_0(G)=0$. Then   $Z\cap(H' \times N')=\langle Z \cap( K(H) \times K(N))\rangle $.
Let $a \in H_1 \cap H'$.
Then $a=\prod_{i=1}^k [h_i, h_i'], ~h_i, h'_i \in H$ and 
$$\xi(a)=\xi( \prod_{i=1}^k [h_i, h_i'])= \prod_{i=1}^k \eta([h_i, h_i']) = \prod_{i=1}^k [\eta(h_i), \eta(h_i')] \in N_1\cap N'.$$
Therefore, $$(a, \xi(a)^{-1}) \in Z\cap(H' \times N')=\langle Z \cap( K(H) \times K(N))\rangle.$$ 
Hence $a \in \langle H_1 \cap K(H)  \rangle$ and so  $ H_1 \cap H'=\langle H_1 \cap K(H)  \rangle$.  Thus $\B_0(H/H_1)=0$  follows by the exact sequence  (\ref{sequence}).
 \\

Conversely, if  $\B_0(H/H_1)=0$, then, by (\ref{sequence}),  $H_1 \cap H'=\langle  H_1 \cap K(H) \rangle$. So,  for $(a, \xi(a)^{-1}) \in Z \cap  (H' \times N')$,
$$a =\prod_{i=1}^s[h_i,h_i'], $$ for $h_i, h_i' \in H$ and 
$[h_i, h_i' ] \in H_1$. Thus $\xi([h_i, h_i' ])= [\eta(h_i), \eta(h_i') ]  \in N_1  \cap  K(N)$ and
 $$\xi(a)=\xi(\prod_{i=1}^s[h_i,h_i'])=\prod_{i=1}^s[\eta(h_i),\eta(h_i')] \in \langle  N_1 \cap K(N) \rangle.$$ Therefore, $(a, \xi(a)^{-1}) \in \langle Z \cap( K(H) \times K(N))\rangle$.
\end{proof}

\begin{thm}
Suppose $H, N$ are groups such that $\xi: H \to N$ is a homomorphism and $\xi|_{H_1}: H_1 \to N_1$ is an isomorphism. Let $G$ be a central product of  $H$ and $N$ via the map $\xi|_{H_1}$. 
Suppose  $\B_0(H)=\B_0(N)=0$. 
\begin{enumerate}[label=(\roman*)]
\item If $H'=K(H)$,  then $\B_0(G)=0$. 
\item Suppose $\xi: H \to N$ is an isomorphism. If $N'=K(N)$ or $\B_0(N/N_1)=0$, then $\B_0(G)=0$.
\end{enumerate}
\end{thm}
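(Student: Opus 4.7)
The plan is to apply Theorem \ref{Centralproduct} directly in part (i) and after swapping the roles of $H$ and $N$ in part (ii), in each case reducing the problem to verifying that $\B_0(H/H_1) = 0$ (respectively $\B_0(N/N_1) = 0$), and then to exploit the exact sequence (\ref{sequence}) of Theorem \ref{exactsequence_Bogo} to deduce this vanishing from the hypotheses.

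For part (i), since the given $\xi: H \to N$ is already a homomorphism extending $\xi|_{H_1}$, hypothesis (ii) of Theorem \ref{Centralproduct} is in force, so $\B_0(G) = 0$ is equivalent to $\B_0(H/H_1) = 0$. I would then apply (\ref{sequence}) with the group $H$ and normal subgroup $H_1$:
\[
0 \to \Hom\Big(\frac{H_1 \cap H'}{\langle H_1 \cap K(H)\rangle}, \mathbb{Q}/\mathbb{Z}\Big) \to \B_0(H/H_1) \xrightarrow{\inf} \B_0(H).
\]
The assumption $H' = K(H)$ forces every element of $H_1 \cap H'$ to be a single commutator, giving $H_1 \cap H' = H_1 \cap K(H) = \langle H_1 \cap K(H)\rangle$, so the left-hand Hom vanishes. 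Combined with $\B_0(H) = 0$, exactness of the sequence yields $\B_0(H/H_1) = 0$ as required.

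For part (ii), the extra hypothesis that $\xi$ is a full isomorphism allows me to swap the roles of $H$ and $N$: the coordinate-swap $H \times N \xrightarrow{\sim} N \times H$ identifies $Z = \{(a, \xi(a)^{-1}) : a \in H_1\}$ with $\{(b, \xi^{-1}(b)^{-1}) : b \in N_1\}$, exhibiting $G$ equivalently as the central product of $N$ and $H$ via $\xi^{-1}|_{N_1}: N_1 \to H_1$. Since $\xi^{-1}: N \to H$ is a homomorphism extending $\xi^{-1}|_{N_1}$, hypothesis (ii) of Theorem \ref{Centralproduct} applies in this swapped setup and gives $\B_0(G) = 0$ if and only if $\B_0(N/N_1) = 0$. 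In the first subcase $N' = K(N)$, the argument from part (i) carried out with $N, N_1$ in place of $H, H_1$ delivers $\B_0(N/N_1) = 0$; in the second subcase $\B_0(N/N_1) = 0$ is given outright.

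The main obstacle is simply a matter of bookkeeping: the principal subtlety is checking that the coordinate-swap genuinely exhibits $G$ as the central product of $N$ and $H$ with respect to $\xi^{-1}|_{N_1}$, so that Theorem \ref{Centralproduct}(ii) is legitimately invokable in the reversed direction. Once that symmetry is unpacked, the rest is a direct application of the exact sequence machinery already available from Theorem \ref{exactsequence_Bogo}, and no new ingredient beyond these two theorems is needed.
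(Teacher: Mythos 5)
Your proof is correct, but it takes a different route from the paper's. The paper does not invoke Theorem \ref{Centralproduct} at all here: it applies the exact sequence (\ref{sequence}) directly to $G=(H\times N)/Z$ to get $\B_0(G)\cong \frac{Z\cap(H'\times N')}{\langle Z\cap(K(H)\times K(N))\rangle}$, and then verifies by hand that every $(a,\xi(a)^{-1})$ in the numerator lies in the denominator --- in (i) by writing $a=[h_1,h_2]$ and pushing forward through $\xi$, and in (ii) by writing $\xi(a)$ as a commutator (or, when $\B_0(N/N_1)=0$, as a product of commutators lying in $N_1$) and pulling back through $\xi^{-1}$. You instead reduce everything to the equivalence $\B_0(G)=0 \Leftrightarrow \B_0(H/H_1)=0$ furnished by Theorem \ref{Centralproduct}(ii) (and, for part (ii), the same equivalence after the coordinate swap exhibiting $G$ as the central product of $N$ and $H$ via $\xi^{-1}|_{N_1}$, which you correctly justify), and then kill $\B_0(H/H_1)$, resp.\ $\B_0(N/N_1)$, via the exact sequence for the pair $(H,H_1)$, resp.\ $(N,N_1)$, using $H'=K(H) \Rightarrow H_1\cap H'=\langle H_1\cap K(H)\rangle$. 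Your argument is more modular and shorter, since it reuses Theorem \ref{Centralproduct} and in fact yields the stronger ``if and only if'' statements; the paper's computation is self-contained and makes explicit the mechanism by which $\xi$ transports commutators between the two factors, which is the same calculation that underlies the proof of Theorem \ref{Centralproduct} itself.
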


\begin{proof}
By Theorem \ref{exactsequence_Bogo}, we have the following exact sequence
\begin{align*}
 0 \to  \Hom\big(\frac{Z\cap(H' \times N')}{\langle Z \cap( K(H) \times K(N))\rangle} , \mathbb{Q}/\mathbb{Z}\big)  \xrightarrow{\tra}  \B_0(G)  
\xrightarrow{\inf} \B_0(H) \times \B_0(N)=0,  
\end{align*}
which implies that  $$\B_0(G) \cong \frac{Z\cap(H' \times N')}{\langle Z \cap( K(H) \times K(N))\rangle}.$$\\

$(i)$   Let $(a, \xi(a)^{-1}) \in H' \times N'$ for $a \in H_1$. 
If $H'=K(H)$, then $a=[h_1,h_2]$, for some $h_1,h_2 \in H$. So $\xi(a)=[\xi(h_1), \xi(h_2)] \in K(N)$ implies $\B_0(G)=0.$\\

$(ii)$ 
  Let $(a, \xi(a)^{-1}) \in H' \times N'$ for $a \in H_1$. 
If $N'=K(N)$, then $\xi(a) =[n_1,n_2]$, for $n_1, n_2 \in N$. So
$a=\xi^{-1}([n_1, n_2])=[\xi^{-1}(n_1), \xi^{-1}(n_2]) \in H_1 \cap K(H)$. This completes the proof.

If  $\B_0(N/N_1)=0$, then, by (\ref{sequence}), $N_1 \cap N'=\langle  N_1 \cap K(N) \rangle$. For   $n_i, n_i' \in  N$ and $[n_i,n_i'] \in N_1$, $$\xi(a) =\prod_{i=1}^t[n_i,n_i'], $$ so
$$a=\xi^{-1}(\prod_{i=1}^t[n_i,n_i']=\prod_{i=1}^s[\xi^{-1}(n_i),\xi^{-1}(n_i')] \in \langle  H_1 \cap K(H) \rangle.$$ Therefore, $(a, \xi(a)^{-1}) \in \langle Z \cap( K(H) \times K(N))\rangle$.

\end{proof}

Now we consider generalized discrete Heisenberg groups.
\begin{example}
Let $r \in \mathbb{N}$.  For positive integers $d_1, d_2, \ldots, d_n$ such that  $d_1|d_2|\cdots|d_n|r$, we define the generalized discrete Heisenberg group $H^{d_1|d_2|\cdots|d_n|r}_{2n+1} (\bZ/r\bZ)$ as the set $(\bZ/r\bZ)^{2n+1}$ with multiplication given by
\[
\begin{array}{l} 
(a, b_1, \ldots, b_n, c_1, \ldots, c_n) (a', b'_1, b'_2, \ldots, b'_n, c'_1, c'_2, \ldots, c'_n) \\ 
= (a+a'+ (\sum_{i = 1}^n d_ib'_i c_i), b_1 + b'_1, \ldots, b_n + b'_n, c_1 + c'_1, \ldots, c_n + c'_n).
\end{array}
\]
\end{example}

\begin{thm}\label{Heisenberg}
$\B_0(H^{d_1|d_2|\cdots|d_n|r}_{2n+1} (\bZ/r\bZ))=0$.
\end{thm}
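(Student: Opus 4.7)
The plan is to argue by induction on $n\geq 1$, realizing $G_n:=H^{d_1|d_2|\cdots|d_n|r}_{2n+1}(\bZ/r\bZ)$ as an iterated central product of three-dimensional discrete Heisenberg factors and invoking Theorem \ref{Centralproduct}. Reading off the multiplication rule, $G_n$ is generated by the central element $z:=(1,0,\ldots,0)$ and the coordinate generators $e_i,f_i$ (corresponding to the $b_i$- and $c_i$-axes), subject to $z^r=e_i^r=f_i^r=1$, $[z,e_i]=[z,f_i]=1$, $[e_i,e_j]=[f_i,f_j]=1$, $[e_i,f_j]=1$ for $i\neq j$, and $[e_i,f_i]=z^{-d_i}$. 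Hence $G_n$ has nilpotency class $2$; its derived subgroup is $G_n'=\langle z^{d_1}\rangle$, cyclic of order $r/d_1$ (using $d_1\mid d_i$), and $K(G_n)=G_n'$, because every $z^{kd_1}=[f_1,e_1^k]$ is itself a single commutator.

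These relations also show that the subgroups $T_i:=\langle z,e_i,f_i\rangle\cong H^{d_i|r}_3(\bZ/r\bZ)$ commute pairwise off the common center $\langle z\rangle$, so $G_n\cong(T_n\times G_{n-1})/Z$, where $Z=\{(z_{T_n}^k,z_{G_{n-1}}^{-k}):k\in\bZ\}$ identifies the two central copies of $z$. For $n\geq 2$, assume inductively that $\B_0(G_{n-1})=0$; the base case $n=1$, namely $\B_0(T)=0$ for three-dimensional $T$, is handled separately below. Apply Theorem \ref{Centralproduct} with $H:=T_n$, $N:=G_{n-1}$, $H_1:=\langle z\rangle_{T_n}$, $N_1:=\langle z\rangle_{G_{n-1}}$, and $\xi$ the natural identification of centers. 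Condition (i) demands $\xi(H_1\cap H')\subseteq N_1\cap K(N)$, i.e.\ $\langle z^{d_n}\rangle\subseteq\langle z^{d_1}\rangle$, which holds because $d_1\mid d_n$; the containment into the set of commutators is realized by $z^{kd_n}=[f_1,e_1^{kd_n/d_1}]$ in $G_{n-1}$. The theorem then reduces $\B_0(G_n)=0$ to $\B_0(T_n/\langle z\rangle)=0$, but $T_n/\langle z\rangle\cong(\bZ/r)^2$ is abelian, so the reduction is immediate.

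It remains to verify the base case $\B_0(T)=0$ for $T=H^{d|r}_3(\bZ/r\bZ)=\langle z,e,f\rangle$ with $[e,f]=z^{-d}$, which I would establish through Moravec's description $\B_0(T)\cong M(T)/M_0(T)$ by a direct computation in $T\wedge T$. Because $T$ has class $2$, a standard manipulation of the defining relations of $G\wedge G$ shows that $\wedge$ is bilinear modulo $M_0(T)$ and factors through $T^{\mathrm{ab}}\times T^{\mathrm{ab}}$. Since $z$ is central, every $z\wedge g$ is in $M_0(T)$; likewise $e\wedge e,\,f\wedge f\in M_0(T)$. Thus $T\wedge T/M_0(T)$ is cyclic on the class of $e\wedge f$. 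The class-$2$ analogue of Lemma \ref{lemma2} yields $e^n\wedge f\equiv(e\wedge f)^n\pmod{M_0(T)}$; taking $n=r/d$ and using that $e^{r/d}$ is central in $T$ (because $[e^{r/d},f]=z^{-r}=1$) forces $(e\wedge f)^{r/d}\in M_0(T)$, whence $|T\wedge T/M_0(T)|\leq r/d$. On the other hand, the commutator map sends $e\wedge f$ to the generator $z^{-d}$ of $T'\cong\bZ/(r/d)$, so the induced surjection $T\wedge T/M_0(T)\to T'$ must be an isomorphism, yielding $M(T)/M_0(T)=0$. The principal obstacle is the careful extraction of bilinearity modulo $M_0(T)$ and of the power formula $e^n\wedge f\equiv(e\wedge f)^n$ directly from the defining relations of the non-abelian exterior square in the class-$2$ setting.
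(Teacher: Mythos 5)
Your proof is correct and follows essentially the same route as the paper: a direct computation in the exterior square showing $(e\wedge f)^{r/d}\in M_0$ for the three-dimensional Heisenberg group, followed by induction realizing the $(2n+1)$-dimensional group as a central product of a three-dimensional factor with a smaller generalized Heisenberg group. The only cosmetic differences are that you split off the factor corresponding to $d_n$ rather than $d_1$ and invoke condition (i) of Theorem \ref{Centralproduct} (reducing to the abelian quotient $T_n/\langle z\rangle$), whereas the paper applies the exact sequence (\ref{sequence}) directly using $H'=K(H)$ and $N'=K(N)$.
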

\begin{proof}
Let $G=H^{d_1|d_2|\cdots|d_n|r}_{2n+1} (\bZ/r\bZ)$.  
It is easy to observe that $G$ has nilpotency class $2$ and 
\[
G=\langle x_i, y_i, z,    1 \leq i \leq n\mid  [x_i, y_i]=z^{d_i}, [z,x_i]=[z,y_i]=1, x^r=y^r=z^r=1\rangle.
\] 
First we show that $\B_0(H^{d_1|r}_{3} (\bZ/r\bZ))=0$.
By Lemma \ref{lemma3},   $[G ,G^{\phi}]$ is generated by $[x, y^\phi], [z, x^\phi], $ and $[z,y^\phi]$. By  Lemma \ref{lemma1}, $[G,G^\phi]$ is abelian and so every $w\in [G,G^\phi]$ can be written  as 
$$w=[x_1,y_1^\phi]^{m_1} ~( \text{mod } M_0^*(G)).$$
Since $[x_1, y_1]$ has order $r/d_1$ and $w \in M^*(G)$,  $r/d_1$ divides $m_1$.
Hence $$M^*(G)=\langle [x_1, y_1^\phi]^{r/d_1} \rangle M_0^*(G).$$
By Lemma \ref{lemma2}, it follows that
$$[x_1, y_1^\phi]^{r/d_1}=[x_1^{r/d_1}, y_1^\phi]=1 \;\; (\text{mod} \; M_0^*(G)).$$
Hence  $\B_0(H^{d_1|r}_{3} (\bZ/r\bZ))=0$.

For $n >1$,  $G$ is a  central product of normal subgroups $H=H^{d_1|r}_{3} (\bZ/r\bZ)$ and $N=H^{d_2|\cdots|d_n|r}_{2n-1} (\bZ/r\bZ)$, where $H_1=\langle
z \rangle$ and  $N_1=\langle z \rangle $ are isomorphic subgroups of $H$ and $N$ respectively.  
Now we use induction on $n$. 
Here $H'=\langle z^{d_1} \rangle$ and  $K'=\langle   z^{d_2}   \rangle$. For $s \in \mathbb{N}$,
$$z^{sd_1}=[x_1^s, y_1], \;\;    z^{sd_2}=[x_2^s, y_2].$$
Therefore $H'=K(H),N'=K(N)$.  
Since $\B_0(H)=\B_0(N)=0$, by the exact sequence (\ref{sequence}), we deduce that
$$\B_0(G)\cong  \frac{Z\cap(H' \times N')}{\langle Z \cap( K(H) \times K(N))\rangle}=0.$$

\end{proof}

\section{Triviality of $\B_0(G)$ for groups $G$ of order $p^6$}\label{Bogop6}
Finite presentations for groups of order dividing $p^6,$ for odd prime $p$, are given in \cite{James}. 
Here we recall some notation from \cite{James}.
By $\nu$ we denote   the smallest positive integer which is a non-quadratic residue (mod $p$), and by $g$ we  denote the smallest positive integer which is a primitive root (mod $p$). Relations of the form $[\alpha, \beta]= 1$ for generators $\alpha$ and  $\beta$ are omitted in the presentations of the groups. 
 For an element $\alpha_{i+1}$ of a finite $p$-group $G$, by
$\alpha_{i+1}^{\left(p\right)}$, we mean  $\alpha_{i+1}^p \alpha_{i+2}^{p \choose 2} \cdots \alpha_{i+k}^{p \choose k} \cdots \alpha_{i+p}$, where $\alpha_{i+2},...,\alpha_{i+p}$ are suitably defined elements of $G$. 
Now we are ready to prove Theorem \ref{groupsp6}. 
We use  Lemmas \ref{lemma1}, \ref{lemma2}  and  \ref{lemma3} heavily in the following proof, without further reference.\\

\noindent\textbf{Proof of Theorem \ref{groupsp6}.}

\begin{proof}
It is enough to show  that  $\B_0(G)=0$ for  $G \in \{\Phi_{15}, \Phi_{28}, \Phi_{29}   \}$,  as the result then follows from \cite[Theorem 1.1]{chen}. Moravec \cite{Mora3} proved that $\B_0(G)$  is invariant up to isoclinism, so  it is enough to choose one group $G$  from each isoclinism class. 

For $x,y \in G$, 
\begin{eqnarray*}
&&[xy,z]=[x,z][x,z,y][y,z], \;\;\;\;  [x,yz]=[x,z][x,y][x,y,z],\\
&&[x^{-1},y]=[x,y,x^{-1}]^{-1}[x,y]^{-1}.
\end{eqnarray*}
We use these equalities in the following computations.\\

\noindent \textbf{Group $G$ in $\Phi_{15}$:}  
We consider
\begin{eqnarray*}
G =\Phi_{15}(1^6) &=&\langle \alpha_i, \beta_1, \beta_2\;\;1 \leq i \leq 4 \mid [\alpha_1, \alpha_2]=[\alpha_3, \alpha_4]=\beta_1,   [\alpha_1, \alpha_3]=\beta_2, \\
&&[\alpha_2, \alpha_4]=\beta_2^g, 
\alpha_i^p=\beta_1^p=\beta_2^p=1\rangle.
\end{eqnarray*}
Since $G'$ is abelian,  $[G,G^\phi]$ has nilpotency class  at most  $2$.
Hence,   every  $w \in M^*(G)$ can be written as 
$$w =[\alpha_1, \alpha_2^\phi]^{m_1}[\alpha_3, \alpha_4^\phi]^{m_2}  [\alpha_1, \alpha_3^\phi]^{m_3}[\alpha_2, \alpha_4^\phi]^{m_4} \; \; (\text{mod } M_0^*(G)).$$
Now 
$\beta_1^{m_1+m_2} \beta_2^{m_3+gm_4}=1$ imply that
 $(m_1+m_2)$ and $(m_3+gm_4)$ are divisible by $p$.
Observe that, (mod $M_0^*(G)$), we have the following:
\begin{eqnarray*}
[\alpha_3, \alpha_4^\phi]^{p}=[\alpha_3^p, \alpha_4^\phi]=1, \;\;  [\alpha_1, \alpha_3^\phi]^{p}=[\alpha_1^p, \alpha_3^\phi]=1.
\end{eqnarray*}  
Thus, 
$$w =([\alpha_1, \alpha_2^\phi][\alpha_3, \alpha_4^\phi]^{-1})^{m_1}  ([\alpha_1, \alpha_3^\phi]^{-g}[\alpha_2, \alpha_4^\phi])^{m_4} \; \; (\text{mod } M_0^*(G)).$$

Now our aim is to show that $[\alpha_1, \alpha_2^\phi][\alpha_3, \alpha_4^\phi]^{-1}$ and $[\alpha_1, \alpha_3^\phi]^{-g}[\alpha_2, \alpha_4^\phi]$ are in  $M_0^*(G)$.
Since $[G', G^\phi] \in M_0^*(G)$, we have the following identities $(\mathrm{mod } ~M_0^*(G))$ :
\begin{eqnarray*}
&&[\alpha_3^{-1}, \alpha_4^\phi]=   [\alpha_3, \alpha_4^\phi]^{-1},  \\
&& [\alpha_1^{-g} , \alpha_3^\phi] = [\alpha_1, \alpha_3^\phi]^{-g},\\
&&[\alpha_1\alpha_3^{-1}, (\alpha_2\alpha_4)^\phi]=[\alpha_1, (\alpha_2\alpha_4)^\phi][ \alpha_3^{-1}, (\alpha_2\alpha_4)^\phi]
=[\alpha_1, \alpha_2^\phi][\alpha_3, \alpha_4^\phi]^{-1}, \\
&&[\alpha_1^{-g}\alpha_2, (\alpha_3\alpha_4)^{\phi}]=[\alpha_1^{-g}, (\alpha_3\alpha_4)^{\phi}][\alpha_2, (\alpha_3\alpha_4)^{\phi}]=  [\alpha_1, \alpha_3^{\phi}]^{-g}  [\alpha_2, \alpha_4^\phi],\\
&&[\alpha_1\alpha_3^{-1}, \alpha_2\alpha_4]=[\alpha_1, \alpha_2\alpha_4][ \alpha_3^{-1}, \alpha_2\alpha_4]
=[\alpha_1, \alpha_2][\alpha_3, \alpha_4]^{-1}=1,\\
&& [\alpha_1^{-g}\alpha_2, \alpha_3\alpha_4]=[\alpha_1^{-g}, \alpha_3\alpha_4][\alpha_2, \alpha_3\alpha_4]=  [\alpha_1, \alpha_3]^{-g}  [\alpha_2, \alpha_4]=1.
\end{eqnarray*}
From the above equalities, 
$$[\alpha_1\alpha_3^{-1}, (\alpha_2\alpha_4)^\phi],  [\alpha_1^{-g}\alpha_2, (\alpha_3\alpha_4)^{\phi}]\in  M_0^*(G).$$
Therefore  $[\alpha_1, \alpha_2^\phi][\alpha_3, \alpha_4^\phi]^{-1}$ and $[\alpha_1, \alpha_3^\phi]^{-g}[\alpha_2, \alpha_4^\phi]$ are in  $M_0^*(G)$, which implies that 
$$\B_0(\Phi_{15}(1^6))=0.$$

\noindent \textbf{Groups $G$ in $\Phi_{28}, \Phi_{29}$ ($p>3$):}\\

For $p>3$, consider the following  groups:
\begin{eqnarray*}
G =\Phi_{28}(222) &=&\langle \alpha, \alpha_i\;\;1 \leq i \leq 4 \mid [\alpha_1, \alpha]=\alpha_2,  [\alpha_2, \alpha]=\alpha_3,  [\alpha_3, \alpha]=[\alpha_1,\alpha_2]=\alpha_4, \\
&& \alpha_3=\alpha_1^p\alpha_2^{\frac{p(p-1)}{2}}, \alpha_4=\alpha_2^p, 
\alpha^{p^2}=\alpha_3^p=\alpha_4^p=1\rangle,
\end{eqnarray*}
and
\begin{eqnarray*}
G =\Phi_{29}(222) &=&\langle \alpha, \alpha_i\;\;1 \leq i \leq 4 \mid [\alpha_1, \alpha]=\alpha_2,  [\alpha_2, \alpha]=\alpha_3,  [\alpha_3, \alpha]=[\alpha_1,\alpha_2]=\alpha_4, \\
&& \alpha_3^{\nu}=\alpha_1^p\alpha_2^{\frac{p(p-1)}{2}}, \alpha_4^{\nu}=\alpha_2^p, 
\alpha^{p^2}=\alpha_3^p=\alpha_4^p=1\rangle.
\end{eqnarray*}
Since $G$ has nilpotency class $4$, $\tau(G)$ has nilpotency class at most $5$. 
Hence, for both  groups, we have the following identities $(\text{mod}~  M_0^*(G))$:
	\begin{align}\label{relation}
&  [\alpha_3^p, \alpha^\phi] = [\alpha_3, \alpha^\phi]^p =1, \nonumber \\
&   [\alpha_2^p, \alpha_1^\phi]= [\alpha_2, \alpha_1^\phi]^p=1, \nonumber\\
& [\alpha_2^p, \alpha^\phi]= [\alpha_2, \alpha^\phi]^p [\alpha_2,\alpha, \alpha_2^\phi]^{p \choose 2}= [\alpha_2, \alpha^\phi]^p [\alpha_3^{p\choose 2}, \alpha_2^\phi]=[\alpha_2, \alpha^\phi]^p=1,
\nonumber \\
& [\alpha_3^{-1},\alpha^\phi]=[\alpha_3,\alpha, (\alpha_3^{-1})^\phi]^{-1}[\alpha_3, \alpha^\phi]^{-1}=[\alpha_3, \alpha^\phi]^{-1},\tag{5.1} 
\\
& [\alpha_2^{-1},\alpha^\phi]=[\alpha_2,\alpha, (\alpha_2^{-1})^\phi]^{-1}[\alpha_2, \alpha^\phi]^{-1}=[\alpha_2, \alpha^\phi]^{-1},\nonumber \\
& [\alpha^p, \alpha_2^\phi]=[\alpha, \alpha_2^\phi]^p[\alpha_3^{-1}, \alpha^\phi]^{p\choose 2}[\alpha_3^{-1}, \alpha,\alpha]^{p\choose 3}=[\alpha, \alpha_2^\phi]^p=[\alpha_2, \alpha^\phi]^{-p}=1,
\nonumber \\
& [\alpha_1^{p^2}, \alpha^\phi]=[\alpha_1,\alpha^\phi]^{p^2}[\alpha_2,\alpha^\phi]^{p^2 \choose 2}=[\alpha_1,\alpha^\phi]^{p^2}=1,
\nonumber \\
&[\alpha^p, \alpha_1^\phi]=[\alpha, \alpha_1^\phi]^p[\alpha_2^{-1}, \alpha^\phi]^{p\choose 2}[\alpha_2^{-1}, \alpha,\alpha^\phi]^{p\choose 3}=[\alpha, \alpha_1^\phi]^p[\alpha_2, \alpha^\phi]^{-{p\choose 2}}[\alpha_3, \alpha^\phi]^{-{p\choose 3}}=[\alpha, \alpha_1^\phi]^p,
\nonumber \\
&[\alpha_1^p, \alpha^\phi]=[\alpha_1, \alpha^\phi]^p [\alpha_1,\alpha, \alpha_1^\phi]^{p \choose 2} [\alpha_2,\alpha_1, \alpha_1^\phi]^{p \choose 3}=[\alpha_1, \alpha^\phi]^p [\alpha_2^{p \choose 2}, \alpha_1^\phi] [\alpha_4^{p \choose 3}, \alpha_1^\phi]
=[\alpha_1, \alpha^\phi]^p, \nonumber\\
&[\alpha_1,\alpha^\phi]^{-p}=[\alpha^\phi, \alpha_1]^p=[\alpha,(\alpha_1^p)^\phi]=[\alpha^p, \alpha_1^\phi].\nonumber
\end{align}
Since $G'$ is abelian, $[G,G^\phi]$ has nilpotency class at most $2$. Therefore every $w \in M^*(G)$ can be written as 
\begin{equation}\label{eq1}\tag{5.2}
w =[\alpha_2, \alpha^\phi]^{m_1}[\alpha_1, \alpha^\phi]^{m_2}  [\alpha_3, \alpha^\phi]^{m_3}[\alpha_1, \alpha_2^\phi]^{m_4} \; \; ( \text{mod } M_0^*(G)),
\end{equation}
for some integers $m_1,m_2,m_3,m_4$.
It is easy to see that  $[\alpha_2^{p \choose 2}, \alpha_1^p]=1$, so  $(\alpha_1^p\alpha_2^{p \choose 2})^{m}=\alpha_1^{mp}\alpha_2^{m{p \choose 2}}$ for every positive integer $m$.

\bigskip

\textbf{Now consider \textbf{$G=\Phi_{28}(222)$}.}  Since $w \in M^*(G)$, by (\ref{eq1}) 
$$\alpha_3^{m_1}\alpha_2^{m_2}\alpha_4^{m_3+m_4}=\alpha_1^{pm_1}\alpha_2^{{p\choose 2} m_1+m_2+p(m_3+m_4)}=1.$$
Therefore, $p$ divides $m_1$  and $p^2$ divides $m_2+p(m_3+m_4)$. Since 
$[\alpha_1, \alpha^\phi]^{p^2}, [\alpha_2,\alpha^\phi]^p\in  M_0^*(G)$, 
by (\ref{eq1})
we have   $( \text{mod } M_0^*(G))$,
\begin{equation}\label{1}\tag{5.3}
\begin{split}
w&=[\alpha_1, \alpha^\phi]^{-p(m_3+m_4)}  [\alpha_3, \alpha^\phi]^{m_3}[\alpha_1, \alpha_2^\phi]^{m_4} \; \; \\
&= ([\alpha_3,\alpha^\phi][\alpha_1, \alpha^\phi]^{-p})^{m_3} ([\alpha_1, \alpha_2^\phi] [\alpha_1, \alpha^\phi]^{-p})^{m_4}  \nonumber\\
&=   ([\alpha_3,\alpha^\phi][\alpha, (\alpha_1^p)^\phi])^{m_3} ([\alpha_1, \alpha_2^\phi] [\alpha^p, \alpha_1^\phi])^{m_4}.
\end{split}
\end{equation}
To prove $\B_0(G)=0$, it is enough to  show that 
$[\alpha_3, \alpha^\phi][\alpha, (\alpha_1^p)^\phi]$ and  $[\alpha_1, \alpha_2^\phi][\alpha^p,\alpha_1^\phi]$ are in $M_0^*(G)$.
Observe that 
$$[\alpha_1^p,\alpha]=[\alpha_1,\alpha]^p[\alpha_2,\alpha_1]^{p\choose 2}=[\alpha_1,\alpha]^p,$$
$$[\alpha_3\alpha, \alpha\alpha_1^p]=[\alpha_3, \alpha\alpha_1^p][\alpha_3, \alpha\alpha_1^p, \alpha] [\alpha, \alpha\alpha_1^p]=[\alpha_3, \alpha][\alpha,\alpha_1^p]=[\alpha_3, \alpha][\alpha,\alpha_1]^p=1,$$
$$[\alpha^p\alpha_1, \alpha_1\alpha_2]=[\alpha^p, \alpha_1\alpha_2][\alpha^p, \alpha_1\alpha_2, \alpha_1][\alpha_1, \alpha_1\alpha_2]=[\alpha^p,\alpha_1][\alpha_1,\alpha_2]=[\alpha,\alpha_1]^p[\alpha_1,\alpha_2]=1.$$ 

Now  using all the relations of   (\ref{relation}),  $(\text{mod}~ M_0^*(G))$,
$$
[\alpha_3\alpha, (\alpha\alpha_1^p)^\phi]=[\alpha_3, (\alpha\alpha_1^p)^\phi][\alpha_3, \alpha\alpha_1^p, \alpha^\phi] [\alpha, (\alpha\alpha_1^p)^\phi]=[\alpha_3, \alpha^\phi][\alpha, (\alpha_1^p)^\phi],$$
\begin{eqnarray*}
[\alpha_1\alpha^p, (\alpha_1\alpha_2)^\phi]&=&[\alpha_1, (\alpha_1\alpha_2)^\phi]
[\alpha_1, (\alpha_1\alpha_2)^\phi, \alpha^p]
[\alpha^p, (\alpha_1\alpha_2)^\phi]\\
&=&   [\alpha_1,\alpha_2^\phi][\alpha_1,\alpha_2, (\alpha^p)^\phi][\alpha^p,\alpha_1^\phi]  =[\alpha_1,\alpha_2^\phi][\alpha^p,\alpha_1^\phi].
\end{eqnarray*}
Therefore,  $$[\alpha_3\alpha, (\alpha\alpha_1^p)^\phi], [\alpha^p\alpha_1, (\alpha_1\alpha_2)^\phi] \in M_0^*(G)$$ implies that 
$$[\alpha_3, \alpha^\phi][\alpha, (\alpha_1^p)^\phi], [\alpha_1, \alpha_2^\phi][\alpha^p,\alpha_1^\phi]\in M_0^*(G)$$ and so, by (\ref{1}), $w \in M_0^*(G)$.
Thus $\B_0(\Phi_{28}(222))=0.$

\bigskip

\textbf{Next consider \textbf{$G=\Phi_{29}(222)$}.} There exists a positive integer $s$ such that $s <p$ and $\nu s=1$ (mod $p$). 
Since $w \in M^*(G)$, by (\ref{eq1}) 
$$\alpha_3^{m_1}\alpha_2^{m_2}\alpha_4^{m_3+m_4}=\alpha_3^{\nu  sm_1}\alpha_2^{m_2}\alpha_4^{\nu s(m_3+m_4)}=\alpha_1^{psm_1}\alpha_2^{{p\choose 2} sm_1+m_2+ps(m_3+m_4)}=1.$$
Therefore, $p$ divides $m_1$  and $p^2$ divides $m_2+ps(m_3+m_4)$. Since 
$[\alpha_1, \alpha^\phi]^{p^2}, [\alpha_2,\alpha^\phi]^p\in  M_0^*(G)$
and using all the relations of  (\ref{relation}), we have  the following $(\text{mod}~  M_0^*(G))$:
\begin{eqnarray*}
&&[\alpha_1^p,\alpha]=[\alpha_1,\alpha]^p[\alpha_2,\alpha_1]^{p\choose 2}=[\alpha_1,\alpha]^p=[\alpha_1,\alpha^p],\\
&&[\alpha^s, (\alpha_1^p)^\phi]=[\alpha,(\alpha_1^p)^\phi]^{s} [\alpha,\alpha_1^p,\alpha^\phi] ^{{s \choose 2}} =[\alpha, (\alpha_1^p)^\phi]^s,
\\
&& [\alpha^p, (\alpha_1^s)^\phi]=[\alpha_1^s, (\alpha^p)^\phi]^{-1}=[\alpha_1,(\alpha^p)^\phi]^{-s}[\alpha_1,\alpha^p, \alpha_1^\phi]^{-{s \choose 2}}= [\alpha^p, \alpha_1^\phi]^s.
\end{eqnarray*}
Therefore, by (\ref{eq1}), we have the following
$ (\text{mod}~  M_0^*(G))$:
\begin{equation}\label{2}\tag{5.4}
\begin{split}
w&=[\alpha_1, \alpha^\phi]^{-ps(m_3+m_4)}  [\alpha_3, \alpha^\phi]^{m_3}[\alpha_1, \alpha_2^\phi]^{m_4} \\
&= ([\alpha_3,\alpha^\phi][\alpha_1, \alpha^\phi]^{-ps})^{m_3} ([\alpha_1, \alpha_2^\phi] [\alpha_1, \alpha^\phi]^{-ps})^{m_4} \nonumber\\
&=   ([\alpha_3,\alpha^\phi][\alpha, (\alpha_1^p)^\phi]^s)^{m_3} ([\alpha_1, \alpha_2^\phi] [\alpha^p, \alpha_1^\phi]^s)^{m_4}   \nonumber\\
&=   ([\alpha_3,\alpha^\phi][\alpha^s, (\alpha_1^p)^\phi])^{m_3} ([\alpha_1, \alpha_2^\phi] [\alpha^p, (\alpha_1^s)^\phi])^{m_4}.
\end{split}
\end{equation}
Hence
\begin{eqnarray*}
&&[\alpha^s,\alpha_1]^p=[\alpha,\alpha_1]^{ps} [\alpha_2^{-1},\alpha] ^{p{s \choose 2}} =[\alpha,\alpha_1]^{ps},
\\
&&[\alpha_1^s,\alpha]^p=[\alpha_1,\alpha]^{ps} [\alpha_2,\alpha_1] ^{p{s \choose 2}} =[\alpha_1,\alpha]^{ps},
\\
&&[\alpha_1^p,\alpha^s]=[\alpha_1, \alpha^s]^p[\alpha_1, \alpha^s, \alpha_1]^{p\choose 2}=[\alpha_1, \alpha]^{sp}[[\alpha_1, \alpha]^{s}, \alpha_1]^{p\choose 2}=[\alpha_1, \alpha]^{sp},
\\
&&[\alpha^p,\alpha_1^s]=[\alpha,\alpha_1^s]^p[\alpha,\alpha_1^s, \alpha]^{p\choose 2}=[\alpha,\alpha_1]^{sp}[[\alpha,\alpha_1]^{s}, \alpha]^{p\choose 2}=[\alpha,\alpha_1]^{sp}
\\
&&[\alpha_3\alpha^s, \alpha\alpha_1^p]=[\alpha_3, \alpha\alpha_1^p][\alpha_3, \alpha\alpha_1^p, \alpha^s] [\alpha^s, \alpha\alpha_1^p]=[\alpha_3, \alpha][\alpha^s,\alpha_1^p]=[\alpha_3, \alpha][\alpha,\alpha_1]^{sp}\\
&& \hspace{2.2cm}  =\alpha_4\alpha_2^{-sp}=\alpha_4\alpha_4^{-\nu s}=\alpha_4\alpha_4^{-1}=1,\\
&&[\alpha_1\alpha^p, \alpha_1^s\alpha_2]=[\alpha_1, \alpha_1^s\alpha_2]
[\alpha_1, \alpha_1^s\alpha_2, \alpha^p]
[\alpha^p, \alpha_1^s\alpha_2]=  [\alpha_1,\alpha_2][\alpha_1,\alpha_2, \alpha^p][\alpha^p,\alpha_1]  \\
&& \hspace{2.2cm} =[\alpha_1,\alpha_2][\alpha^p,\alpha_1^s]=\alpha_4\alpha_2^{-sp}=\alpha_4\alpha_4^{-\nu s}=\alpha_4\alpha_4^{-1}=1.
\end{eqnarray*}

Now $(\text{mod} ~M_0^*(G))$,  
$$
[\alpha_3\alpha^s, (\alpha\alpha_1^p)^\phi]=[\alpha_3, (\alpha\alpha_1^p)^\phi][\alpha_3, \alpha\alpha_1^p, (\alpha^s)^\phi] [\alpha^s, (\alpha\alpha_1^p)^\phi]=[\alpha_3, \alpha^\phi][\alpha^s, (\alpha_1^p)^\phi],$$
\begin{eqnarray*}
[\alpha_1\alpha^p, (\alpha_1^s\alpha_2)^\phi]&=&[\alpha_1, (\alpha_1^s\alpha_2)^\phi]
[\alpha_1, (\alpha_1^s\alpha_2)^\phi, \alpha^p]
[\alpha^p, (\alpha_1^s\alpha_2)^\phi]\\
&=&   [\alpha_1,\alpha_2^\phi][\alpha_1,\alpha_2, (\alpha^p)^{\phi}][\alpha^p,\alpha_1^\phi]  =[\alpha_1,\alpha_2^\phi][\alpha^p,(\alpha_1^s)^\phi].
\end{eqnarray*}
Therefore,    
$[\alpha_3\alpha^s, \alpha\alpha_1^p]=[\alpha_1\alpha^p, \alpha_1^s\alpha_2]=1$ implies that   
$$[\alpha_3, \alpha^\phi][\alpha^s, (\alpha_1^p)^\phi], [\alpha_1,\alpha_2^\phi][\alpha^p,(\alpha_1^s)^\phi] \in M_0^*(G).$$ 
Hence  by (\ref{2}), $w \in M_0^*(G)$ and 
$\B_0(\Phi_{29}(222))=0.$\\

\noindent \textbf{Groups $G$ in $\Phi_{28}, \Phi_{29}$ ($p=3$):}\\

Let $p=3$, $\nu=2$. We consider the following groups:
\begin{eqnarray*}
	G=\Phi_{28}(222) &=&\langle \alpha, \alpha_i\;\;1 \leq i \leq 4 \mid [\alpha_1, \alpha]=\alpha_2,  [\alpha_2, \alpha]=\alpha_3,  [\alpha_3, \alpha]=[\alpha_1,\alpha_2]=\alpha_4, \\
	&& \alpha_1^3=\alpha_2^{3}=1, 
	\alpha^{9}=\alpha_3^3=\alpha_4^3=1\rangle,\\
	G=\Phi_{29}(222) &=&\langle \alpha, \alpha_i\;\;1 \leq i \leq 4 \mid [\alpha_1, \alpha]=\alpha_2,  [\alpha_2, \alpha]=\alpha_3,  [\alpha_3, \alpha]=[\alpha_1,\alpha_2]=\alpha_4, \\
	&& \alpha_3=\alpha_1^3\alpha_2^{3}, \alpha_4=\alpha_2^3, 
	\alpha^{9}=\alpha_3^3=\alpha_4^3=1\rangle.
\end{eqnarray*}
We denote the $i$-th group  of order $3^6$  in the {\sc SmallGroups} library  \cite{besche2002millennium}  by $3^6\# i$.
Our computations in GAP \cite{gap2018groups} show that $\Phi_{28}(222)\cong 3^6\# 68,~\Phi_{29}(222)\cong  3^6\# 75$, and both groups have  trivial Bogomolov multiplier.
\end{proof}

\noindent
{\bf Acknowledgement:} 
The author acknowledges the support received from  C.V. Raman postdoctoral fellowship (R(IA)/CVR-PDF/2020/2700).
The author is thankful to Prof. E. K. Narayanan and Prof. Pooja Singla for their encouragement and helpful comments. The author also thanks the referees for careful reading and valuable comments which  improved the article.

\bibliographystyle{amsplain}
\bibliography{Bogomolov}

\end{document}